\documentclass[a4paper, reqno, 12pt]{amsart}

\usepackage[usenames,dvipsnames]{color}
\usepackage{amsthm,amsfonts,amssymb,amsmath,amsxtra}
\usepackage[all]{xy}
\SelectTips{cm}{}
\usepackage{xr-hyper}
\usepackage[colorlinks=
   citecolor=Black,
   linkcolor=Red,
   urlcolor=Blue]{hyperref}
\usepackage{verbatim}

\usepackage[margin=1.25in]{geometry}
\usepackage{mathrsfs}

\RequirePackage{xspace}
\RequirePackage{etoolbox}
\RequirePackage{varwidth}
\RequirePackage{enumitem}
\RequirePackage{tensor}
\RequirePackage{mathtools}
\RequirePackage{longtable}
\RequirePackage{multirow}

\setcounter{tocdepth}{1}

\def\ge{\geqslant}
\def\le{\leqslant}
\def\a{\alpha}
\def\b{\beta}

\def\G{\Gamma}

\def\D{\Delta}

\def\s{\sigma}
\def\t{\tau}

\def\k{\kappa}
\def\l{\lambda}

\def\i{^{-1}}

\def\<{\langle}
\def\>{\rangle}

\newcommand{\bG}{\mathbf G}

\newcommand{{\BG}}{\ensuremath{\mathbb {G}}\xspace}

\newcommand{{\BK}}{\ensuremath{\mathbb {K}}\xspace}

\newcommand{\BN}{\ensuremath{\mathbb {N}}\xspace}

\newcommand{\BQ}{\ensuremath{\mathbb {Q}}\xspace}

\newcommand{\BS}{\ensuremath{\mathbb {S}}\xspace}

\newcommand{\CB}{\ensuremath{\mathcal {B}}\xspace}

\newcommand{\CI}{\ensuremath{\mathcal {I}}\xspace}

\newcommand{\CO}{\ensuremath{\mathcal {O}}\xspace}

\newcommand{\Ad}{{\text{Ad}}}

\DeclareMathOperator{\Adm}{Adm}

\DeclareMathOperator{\Gal}{Gal}

\newcommand{\id}{\ensuremath{\text{id}}\xspace}

\DeclareMathOperator{\rank}{rank}

\def\tW{\tilde W}


%
\newtheorem{theorem}{Theorem}
\newtheorem{proposition}[theorem]{Proposition}
\newtheorem{lemma}[theorem]{Lemma}

\newtheorem{corollary}[theorem]{Corollary}

\theoremstyle{definition}

\newtheorem{remark}[theorem]{Remark}

\numberwithin{equation}{section}
\numberwithin{theorem}{section}


\setitemize[0]{leftmargin=*,itemsep=\the\smallskipamount}
\setenumerate[0]{leftmargin=*,itemsep=\the\smallskipamount}

\renewcommand{\to}{%
   \ifbool{@display}{\longrightarrow}{\rightarrow}%
   }
\let\shortmapsto\mapsto
\renewcommand{\mapsto}{%
   \ifbool{@display}{\longmapsto}{\shortmapsto}%
   }
\newlength{\olen}
\newlength{\ulen}
\newlength{\xlen}
\newcommand{\xra}[2][]{%
   \ifbool{@display}%
      {\settowidth{\olen}{$\overset{#2}{\longrightarrow}$}%
       \settowidth{\ulen}{$\underset{#1}{\longrightarrow}$}%
       \settowidth{\xlen}{$\xrightarrow[#1]{#2}$}%
       \ifdimgreater{\olen}{\xlen}%
          {\underset{#1}{\overset{#2}{\longrightarrow}}}%
          {\ifdimgreater{\ulen}{\xlen}%
             {\underset{#1}{\overset{#2}{\longrightarrow}}}
             {\xrightarrow[#1]{#2}}}}%
      {\xrightarrow[#1]{#2}}
   }
\makeatother
\newcommand{\xyra}[2][]{%
   \settowidth{\xlen}{$\xrightarrow[#1]{#2}$}%
   \ifbool{@display}%
      {\settowidth{\olen}{$\overset{#2}{\longrightarrow}$}%
       \settowidth{\ulen}{$\underset{#1}{\longrightarrow}$}%
       \ifdimgreater{\olen}{\xlen}%
          {\mathrel{\xymatrix@M=.12ex@C=3.2ex{\ar[r]^-{#2}_-{#1} &}}}%
          {\ifdimgreater{\ulen}{\xlen}%
             {\mathrel{\xymatrix@M=.12ex@C=3.2ex{\ar[r]^-{#2}_-{#1} &}}}
             {\mathrel{\xymatrix@M=.12ex@C=\the\xlen{\ar[r]^-{#2}_-{#1} &}}}}}%
      {\mathrel{\xymatrix@M=.12ex@C=\the\xlen{\ar[r]^-{#2}_-{#1} &}}}%
   }
\makeatletter
\newcommand{\xla}[2][]{%
   \ifbool{@display}%
      {\settowidth{\olen}{$\overset{#2}{\longleftarrow}$}%
       \settowidth{\ulen}{$\underset{#1}{\longleftarrow}$}%
       \settowidth{\xlen}{$\xleftarrow[#1]{#2}$}%
       \ifdimgreater{\olen}{\xlen}%
          {\underset{#1}{\overset{#2}{\longleftarrow}}}%
          {\ifdimgreater{\ulen}{\xlen}%
             {\underset{#1}{\overset{#2}{\longleftarrow}}}
             {\xleftarrow[#1]{#2}}}}%
      {\xleftarrow[#1]{#2}}
   }
\newcommand{\isoarrow}{%
   \ifbool{@display}{\overset{\sim}{\longrightarrow}}{\xrightarrow\sim}%
   }
   
\begin{document}

\title[Dimension of $X(\mu, b)$]{Dimension formula of the affine Deligne-Lusztig variety $X(\mu, b)$}

\author{Xuhua He}
\address{X.~H., The Institute of Mathematical Sciences and Department of Mathematics, The Chinese University of Hong Kong, Shatin, N.T., Hong Kong.}
\email{xuhuahe@math.cuhk.edu.hk}
\author{Qingchao Yu}
\address{Q.~Y., Academy of Mathematics and Systems Science, Chinese Academy of Sciences, Haidian, Beijing.}
\email{yuqingchao15@mails.ucas.ac.cn}

\thanks{}

\keywords{Affine Deligne-Lusztig varieties}
\subjclass[2010]{20G25, 11G25, 20F55}


\begin{abstract}
The study of certain union $X(\mu, b)$ of affine Deligne-Lusztig varieties in the affine flag varieties arose from the study of Shimura varieties with Iwahori level structure. In this paper, we give an explicit formula of $\dim X(\mu, b)$ for sufficiently large dominant coweight $\mu$. 
\end{abstract}

\maketitle

\section{Introduction}

\subsection{Motivation} The notion of affine Deligne-Lusztig variety was first introduced by Rapoport in \cite{Ra} and plays an important role in arithmetic geometry and the Langlands program. In this paper, the main characters are the affine Deligne-Lusztig varieties $X_w(b)$ in the affine flag varieties and certain union $X(\mu, b)$ of these varieties. 

Let us first explain the notations. Let $F$ be a nonarchimedean local field and $\breve F$ be the completion of its maximal unramified extension. Let $\bG$ be a connected reductive group over $F$. Let $\breve \CI$ be the standard Iwahori subgroup of $\bG(\breve F)$. Let $w$ be an element in the Iwahori-Weyl group $\tW$ and $\mu$ be a conjugacy class of cocharacters of $\bG$ (over the algebraic closure $\bar F$). Let $\Adm(\mu)$ be the $\mu$-admissible subset of $\tW$. See \S\ref{sec:adlv} for more details.  
Then for $b \in \bG(\breve F)$, we define
\begin{gather*}
X_w(b)=\{g \breve \CI \in \bG(\breve F)/\breve \CI; g \i b \s(g) \in \breve \CI \dot w \breve \CI\}; \\
X(\mu, b)=\bigsqcup_{w \in \Adm(\mu)} X_w(b)=\{g \breve \CI \in \bG(\breve F)/\breve \CI; g \i b \s(g) \in \breve \CI \Adm(\mu) \breve \CI\}. 
\end{gather*}

They are subschemes, locally of finite type, of the affine flag variety
(in the usual sense in equal characteristic; in the sense of Zhu [62] in mixed characteristic). In the case where the pair $(\bG, \mu)$ is a Shimura datum, $X(\mu, b)$ is the group-theoretic model for the Newton stratum $S_{[b]}$ corresponding to the $\s$-conjugacy class $[b]$ of $\bG(\breve F)$, inside the special fiber of the Shimura variety $Sh$ with Iwahori level structure. In particular, the dimension of $X(\mu, b)$ in this case is expected to be equal to the dimension of the Newton stratum $S_{[b]}$ minus the dimension of the central leaves. See \cite{HR} and \cite[\S 7]{GHN}. 

\subsection{Previous results} The first fundamental question in the study of $X(\mu, b)$ is the nonemptiness pattern, i.e., for which $b$, $X(\mu, b)$ is nonempty. Kottwitz and Rapoport in \cite{KR03} and \cite{Ra} conjectured that the nonemptiness pattern of $X(\mu, b)$ is given by the ``Mazur's inequality''. Some partial results were obtained by Rapoport and Richartz in \cite{RR} and by Wintenberger in \cite{Wi}. The conjecture was finally proved in \cite{He-KR}. 

The next fundamental question is the dimension formula of $X(\mu, b)$. In \cite{GY}, G\"ortz and Yu studied the dimension of $X(\mu, b)$ for basic $b$ in the Siegel case and obtained some partial results. In \cite{GHN}, G\"ortz, Nie and the first author introduced the fully Hodge-Newton decomposable cases, in which the dimension and the geometric structures of $X(\mu, b)$ are easy to describe. Other than those cases, very little was known about the dimension of $X(\mu, b)$. 

Let us make a digression and give a quick review on the dimension formula of an individual affine Deligne-Lusztig variety $X_w(b)$. 

In \cite[Conjecture 9.5.1]{GHKR2}, G\"ortz, Haines, Kottwitz and Reuman made two influential conjectures on the dimension formula of $X_w(b)$.  

\begin{itemize}
\item For the basic $\s$-conjugacy class $[b]$, they gave a conjecture in \cite[Conjecture 9.5.1 (a)]{GHKR2} on the dimension formula for $X_w(b)$ for $w$ in the so-called Shrunken Weyl chamber. This conjecture was established in \cite{He14}. 

\item For arbitrary $\s$-conjugacy class $[b]$, they gave a conjecture in \cite[Conjecture 9.5.1 (b)]{GHKR2} on the difference between the dimension of $X_w(b)$ and $X_w(b_{basic})$ for sufficiently large $w$. Here $[b_{basic}]$ is the associated basic $\s$-conjugacy class. This conjecture was established very recently in \cite{He20} under the assumption that $w$ is in the Shrunken Weyl chamber. 
\end{itemize}

\subsection{Main result} 
Rapoport in \cite{Ra} predicted ``whereas the individual affine Deligne-Lusztig varieties are very difficult to understand, the situation seems to change radically when we form a suitable finite union (i.e. $X(\mu, b)$) of them.''

As we have now a pretty good understanding on the dimension of an individual affine Deligne-Lusztig variety $X_w(b)$ for sufficiently large $w$, it is natural to expect a simple dimension formula of $X(\mu, b)$ for sufficiently large $\mu$. In this paper, we establish such a formula for quasi-split groups. For simplicity, we focus on the quasi-simple and split groups in the introduction. The explicit formula is as follows

\begin{theorem}[Theorem \ref{main}]\label{thmA}
Suppose that $\bG$ is quasi-simple and split over $F$ and $\mu$ is ``sufficiently regular''. Let $[b] \in B(\bG, \mu)$ with $\mu \ge \nu_b+2\rho^\vee$. Then $$\dim X(\mu, b)=\<\mu-\nu_b, \rho\>-\frac{1}{2}\text{def}_{\bG}(b)+\frac{\ell(w_0)-\ell_R(w_0)}{2}.$$
\end{theorem}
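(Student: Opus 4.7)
\medskip

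\noindent\textbf{Proof proposal.} The plan is to compute $\dim X(\mu, b)$ by maximizing the dimensions of the individual affine Deligne-Lusztig varieties $X_w(b)$ over $w \in \Adm(\mu)$, leveraging the dimension formulas for single $X_w(b)$ from \cite{He14} and \cite{He20}.

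First, since $X(\mu, b) = \bigsqcup_{w \in \Adm(\mu)} X_w(b)$ is a finite union of locally closed subschemes, one has the trivial reduction
$$\dim X(\mu, b) \;=\; \max_{w \in \Adm(\mu)} \dim X_w(b),$$
so the problem becomes a combinatorial optimization over the admissible set. Next, for $w$ in the shrunken Weyl chamber, combining the basic-case formula of \cite{He14} with the general-$b$ difference formula of \cite{He20} yields
$$\dim X_w(b) \;=\; \tfrac{1}{2}\bigl(\ell(w) + \ell(\eta_\s(w))\bigr) - \<\nu_b,\rho\> - \tfrac{1}{2}\,\text{def}_{\bG}(b),$$
where $\eta_\s: \tW \to W_0$ is the standard finite-part map. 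The hypothesis $\mu \ge \nu_b + 2\rho^\vee$ is precisely the slack needed to place $[b]$ inside $B(\bG, \mu)$ and to apply the difference formula of \cite{He20} uniformly.

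The combinatorial heart of the proof is to establish
$$\max_{w \in \Adm(\mu)\,\cap\,\text{shrunken}} \bigl(\ell(w) + \ell(\eta_\s(w))\bigr) \;=\; 2\<\mu,\rho\> + \ell(w_0) - \ell_R(w_0),$$
by exhibiting an explicit maximizer. Writing a candidate in normal form $w = x\, t^\lambda\, y$ with $\lambda$ dominant, the translation length $\ell(w)$ is maximized by $\lambda = \mu$, giving $2\<\mu,\rho\>$; the finite-part quantity $\ell(\eta_\s(w))$ depends on $x, y$ and on the $\s$-action, and its maximum value on the relevant locus is exactly $\ell(w_0) - \ell_R(w_0)$ (here the term $\ell_R(w_0)$ records the $\s$-coinvariant contribution). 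The task is then to produce a single element that simultaneously attains both maxima while lying in the shrunken chamber. This is where the regularity hypothesis enters quantitatively: ``$\mu$ sufficiently regular'' is what guarantees enough room inside $\Adm(\mu)$ to push this joint extremizer into the shrunken region.

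Finally, to confirm that elements of $\Adm(\mu)$ outside the shrunken chamber cannot achieve a larger dimension, I would invoke the virtual-dimension upper bound on $\dim X_w(b)$ (valid with no shrunken hypothesis), and through a case analysis near the Weyl chamber walls show that this bound is strictly smaller than the candidate maximum whenever $w$ fails to be shrunken; the required strictness again rests on sufficient regularity of $\mu$. \emph{The main obstacle is the combinatorial step}: pinning down a simultaneous maximizer of $\ell(w)$ and $\ell(\eta_\s(w))$ inside $\Adm(\mu) \cap$ shrunken, and evaluating the latter precisely as $\ell(w_0) - \ell_R(w_0)$, since these two length statistics generally compete rather than coexist at their extrema. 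Making the phrase ``sufficiently regular'' effective---by giving a concrete bound on $\mu$ that forces both the existence of such a joint extremizer and the strictness of the non-shrunken upper bound---is the most delicate point of the argument.
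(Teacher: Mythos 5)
Your high-level skeleton---bound $\dim X(\mu,b)$ by a maximum over $w\in\Adm(\mu)$, invoke the dimension formula for a single $X_w(b)$, and then do a combinatorial optimization---is the same as the paper's, which goes via the virtual dimension $d_w(b)=\tfrac12(\ell(w)+\ell(\eta_\s(w))-\text{def}_\bG(b))-\<\nu_b,\rho\>$, Corollary \ref{cor:4.2}, and \cite{He20}. But the proposal contains two genuine gaps.

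First, the proposed ``decomposition'' of the maximizer is incorrect as stated. If you force $\ell(w)=2\<\mu,\rho\>$, then necessarily $w=u\,t^{\underline\mu}u^{-1}$ for some $u\in W_0$, and for $\s=\id$ (the split case in the theorem) this gives $\eta_\s(w)=u^{-1}u=1$, hence $\ell(\eta_\s(w))=0$, not $\ell(w_0)-\ell_R(w_0)$. The two statistics really do compete: the actual optimizer in the paper is $w=x\,t^{\underline\mu}w_0 x^{-1}$ with $x\le xw_0$ and $\ell(w_0)-2\ell(x)=\ell_R(w_0)$, for which $\eta_\s(w)=w_0$ (so $\ell(\eta_\s(w))=\ell(w_0)$) and $\ell(w)=2\<\mu,\rho\>-\ell_R(w_0)$; the sum is the same $2\<\mu,\rho\>+\ell(w_0)-\ell_R(w_0)$, but the individual terms are nothing like what you wrote. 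More seriously, to \emph{prove} that this is the maximum of $d_w(b)$ over $\Adm(\mu)$ you need a workable description of $\Adm(\mu)$ that couples $\lambda$, $x$, $y$; the paper gets this from the quantum Bruhat graph (Proposition \ref{adm-set}): for $\mu$ superregular, $x\,t^{\underline\lambda}y\in\Adm(\mu)$ iff there is a QBG path from $x$ to $y^{-1}$ of weight $\underline\mu-\underline\lambda$, which lets one rewrite $d_w(b)$ in terms of $d_\Gamma(x,y^{-1})$ and $\text{wt}(x,y^{-1})$ and bound it cleanly. Nothing in your sketch identifies this structure, so the optimization has no actual handle.

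Second, you flag the identity $\max\{\ell(x):x\le xw_0\}=\tfrac12(\ell(w_0)-\ell_R(w_0))$ (equivalently $\min\{d_\Gamma(x,xw_0)\}=\ell_R(w_0)$) as the main obstacle but offer no argument. In the paper this is Theorem \ref{explicit}, and it is the substantive new combinatorial input: one direction is a short reflection-length estimate, but the other requires an inductive case-by-case construction of maximizers across every irreducible type (with the exceptional types checked via the Atlas software), so this is not a step that can be left as a black box. Finally, your proposed treatment of non-shrunken $w$ by a ``case analysis near the walls'' is avoidable: since $\dim X_w(b)\le d_w(b)$ holds for \emph{all} $w$ with no shrunken hypothesis, the upper bound follows at once from the virtual-dimension maximum, and you only need the exact formula from \cite{He20} for the one cordial element you exhibit.
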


We refer to \S\ref{sec:notation}, \S\ref{sec:vir} and \S\ref{sec:refl} for the notations in this formula. We also give an explicit bound on $\mu$ so that the above dimension formula holds. 


\subsection{Strategy}

Recall that $X(\mu, b)=\sqcup_{w \in \Adm(\mu)} X_w(b)$.

The first main ingredient is an explicit description of most elements in the admissible set $\Adm(\mu)$. Note that the admissible set is a quite complicated combinatorial object. Partially motivated by the work of Haines and Ng\^{o} \cite{HN}, in \cite{HH} Haines and the first author gave an explicit description of all the elements $w$ in $\Adm(\mu)$. However, such description is not easy to use in order to calculate the dimension of $X_w(b)$. In \S\ref{sec:qbg-adm}, we give a different description for the ``sufficiently large'' elements inside $\Adm(\mu)$, via the quantum Bruhat graph introduced by Fomin, Gelfand and Postnikov in \cite{FGP}. 

The second main ingredient is the dimension formula of an individual affine Deligne-Lusztig variety $X_w(b)$ for sufficiently large $w$, which is established in \cite{He14} for basic $b$ and \cite{He20} for arbitrary $b$. Note that we do not have an explicit formula of $\dim X_w(b)$ for all $w$. However, we will show that in the end, those ``not-sufficiently-large'' $w$ do not make contribution to the top dimensional irreducible components of $X(\mu, b)$. 

Based on the two ingredients that we discussed above, we reduce the problem on $\dim X(\mu, b)$ to a problem on the quantum Bruhat graphs, which is a combinatorial problem on the finite Weyl groups. Further consideration reduces this problem to the problem of determining
$$\max\{\ell(x); x \in W_0, x \le x w_0\}.$$

This is a problem on the finite Weyl group, which is of independent interest. Jeff Adams and David Vogan helped us to compute this number for exceptional groups via the Atlas software. Based on these data, we finally realize that this number equals to $\frac{\ell(w_0)-\ell_R(w_0)}{2}$ and we then find a proof for it. 

\smallskip

{\bf Acknowledgment:} We thank Jeff Adams and David Vogan for their help on the computation for exceptional groups. We thank George Lusztig for pointing out an intriguing connection between the reflection length (and thus the dimension formula in Theorem \ref{thmA}) with the dimension of certain Springer fibers. We thank Ulrich G\"ortz and Elizabeth Mili\'cevi\'c for many useful comments on a preliminary version of the paper. 

XH is partially supported by a start-up grant and by funds connected with Choh-Ming Chair at CUHK. QY is supported by the National Natural Science Foundation of China (Grant No. 11621061).

\section{Preliminary}

\subsection{Notations}\label{sec:notation} Recall that $F$ is a non-archimedean local field and $\breve F$ is the completion of the maximal unramified extension of $F$. We write $\Gamma$ for $\Gal(\overline F/F)$, and write $\Gamma_0$ for the inertia subgroup of $\Gamma$.

Let $\bG$ be a quasi-split connected reductive group over $F$. Let $\s$ be the Frobenius morphism of $\breve F/F$. We write $\breve G$ for $\bG(\breve F)$. We use the same symbol $\s$ for the induced Frobenius morphism on $\breve G$. Let $S$ be a maximal $\breve F$-split torus of $\bG$ defined over $F$, which contains a maximal $F$-split torus. Let $T$ be the centralizer of $S$ in $\bG$. Then $T$ is a maximal torus. 

Let $\mathcal A$ be the apartment of $\bG_{\breve F}$ corresponding to $S_{\breve F}$. We fix a $\s$-stable alcove $\mathfrak a$ in $\mathcal A$, and let $\breve \CI \subset \breve G$ be the Iwahori subgroup corresponding to $\mathfrak a$. Then $\breve \CI$ is $\s$-stable. 

We denote by $N$ the normalizer of $T$ in $\bG$. The \emph{Iwahori--Weyl group} (associated to $S$) is defined as $$\tW= N(\breve F)/T(\breve F) \cap \breve \CI.$$ For any $w \in \tW$, we choose a representative $\dot w$ in $N(\breve F)$. The action $\s$ on $\breve G$ induces a natural action of $\s$ on $\tW$, which we still denote by $\s$. 

We fix a $\s$-stable special vertex of the base alcove $\mathfrak a$. Let $W_0=N(\breve F)/T(\breve F)$ be the relative Weyl group. Then we have the splitting $$\tW=X_*(T)_{\G_0} \rtimes W_0=\{t^{\underline \l} w; \underline \l \in X_*(T)_{\G_0}, w \in W_0\}.$$ 

Since $\bG$ is quasi-split over $F$, $\s$ acts naturally on $X_*(T)_{\G_0}$ and on $W_0$. Let $\Phi^+$ be the set of positive relative roots and $\D$ be the set of relative simple roots determined by the dominant Weyl chamber. Then $\s(\D)=\D$. Let $w_0$ be the longest element in $W_0$. Let $\rho$ be the dominant weight with $\<\a^\vee, \rho\>=1$ for any $\a \in \D$ and $\rho^\vee$ be the dominant coweight with $\<\rho^\vee, \a\>=1$ for any $\a \in \D$. 

We denote by $\ell$ the length function on $\tW$ and on $W_0$, and by $\le$ the Bruhat order on $\tW$ and on $W_0$. Let $\tilde \BS$ be the set of simple reflections in $\tW$ and $\BS \subset \tilde \BS$ be the set of simple reflections in $W_0$. 

\subsection{The $\s$-conjugacy classes of $\breve G$}

We define the $\s$-conjugation action on $\breve G$ by $g \cdot_\s g'=g g' \s(g) \i$. Let $B(\bG)$ be the set of $\s$-conjugacy classes on $\breve G$. The classification of the $\s$-conjugacy classes is obtained by Kottwitz in \cite{Ko1} and \cite{Ko2}. Any $\s$-conjugacy class $[b]$ is determined by two invariants: 
\begin{itemize}
	\item The element $\k([b]) \in \pi_1(\bG)_{\s}$; 
	
	\item The Newton point $\nu_b \in \big((X_*(T)_{\Gamma_0, \BQ})^+\big)^{\langle\sigma\rangle}$. 
\end{itemize}

Here $-_\s$ denotes the $\s$-coinvariants, 
$(X_*(T)_{\Gamma_0, \BQ})^+$ denotes the intersection of  $X_*(T)_{\Gamma_0}\otimes \BQ=X_*(T)^{\Gamma_0}\otimes \BQ$ with the set $X_*(T)_\BQ^+$ of dominant elements in $X_*(T)_\BQ$. 

Let $\mu$ be a conjugacy class of cocharacters of $\bG$. We choose a $\breve F$-rational dominant representative $\mu_+$ in this conjugacy class and denote by $\underline \mu$ the image in $X_*(T)_{\Gamma_0} $ of  $\mu_+$. See \cite[\S 2.2]{GHR} for more details.  

The set of \emph{neutrally acceptable} $\s$-conjugacy classes is defined by 
$$
B(\bG, \mu)=\{[b] \in B(\bG); \k([b])=\k(\mu), \nu_b \le \mu^\diamond\},
$$
where $\mu^\diamond \in X_*(T)^{\G_0} \otimes \BQ\cong X_*(T)_{\G_0} \otimes \BQ$ is the average of the $\s$-orbit on $\mu_+$.

\subsection{The union of affine Deligne-Lusztig varieties}\label{sec:adlv}
Let $Fl=\breve G/\breve \CI$ be the {\it affine flag variety}. For any $b \in \breve G$ and $w \in \tW$, we define the corresponding {\it affine Deligne-Lusztig variety} in the affine flag variety $$X_w(b)=\{g \breve \CI \in \breve G/\breve \CI; g \i b \s(g) \in \breve \CI \dot w \breve \CI\} \subset Fl.$$

The {\it $\mu$-admissible set} is defined by
$$
\Adm(\mu)=\{w \in \tilde W; w \le t^{x(\underline \mu)} \text{ for some }x \in W_0\} ,
$$
cf.~\cite{Ra}. 

For any $b \in \breve G$, we set $$X(\mu, b)=\bigsqcup_{w \in \Adm(\mu)} X_w(b)=\{g \breve \CI \in \breve G/\breve \CI; g \i b \s(g) \in \breve \CI \Adm(\mu) \breve \CI\}.$$

The following result was conjectured by Kottwitz and Rapoport in \cite{KR03, Ra} and proved in \cite[Theorem A]{He-KR}.

\begin{theorem}\label{KR-conj}
    Let $b \in \breve G$.  Then $X(\mu, b) \neq \emptyset$ if and only if $[b] \in B(\bG, \mu)$.
\end{theorem}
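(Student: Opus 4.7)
The plan is to prove the two implications separately, with the reverse direction being the substantial content.

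For the forward direction, suppose $X(\mu,b) \neq \emptyset$. Then there exists $w \in \Adm(\mu)$ with $X_w(b) \neq \emptyset$. The Kottwitz invariant is locally constant and takes the same value on any $\breve\CI \dot w \breve \CI$, so the condition $\k([b]) = \k(\mu)$ will follow from the fact that every $w \in \Adm(\mu)$ has the same image in $\pi_1(\bG)_\s$ as $t^{\underline\mu}$, since the Bruhat bound $w \le t^{x(\underline\mu)}$ preserves $\k$. For the Newton inequality $\nu_b \le \mu^\diamond$, I would invoke Rapoport--Richartz \cite{RR}: nonemptiness of $X_w(b)$ forces $\nu_b$ to be bounded above, in the dominance order, by the Newton vector attached to $w$, and this in turn is bounded by $\mu^\diamond$ because $w \in \Adm(\mu)$.

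For the reverse direction, fix $[b] \in B(\bG,\mu)$. The strategy is to exhibit a $w \in \Adm(\mu)$ that is of minimal length in its $\s$-conjugacy class in $\tW$ and whose $\s$-conjugacy class in $\breve G$ coincides with $[b]$. Once such a $w$ is produced, the theory of minimal length elements in $\s$-conjugacy classes (developed by He in connection with class polynomials of affine Hecke algebras) implies that $X_w(b) \neq \emptyset$, since for $w$ of minimal length one has $X_w(\dot w) \neq \emptyset$ and $[\dot w]_\s = [b]$ by construction.

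The construction of such a $w$ proceeds in two stages. First, a Hodge--Newton reduction: whenever the pair $([b],\mu)$ is Hodge--Newton decomposable with respect to a proper standard Levi $\bM \subset \bG$, I would reduce the nonemptiness statement to the corresponding statement for $\bM$, using that an HN-decomposable Newton stratum lies in a Levi-translated affine flag variety compatibly with the admissible set. By induction on the semisimple rank this leaves the HN-indecomposable case. Second, in the indecomposable case, I would induct on $[b]$ in the natural partial order on $B(\bG,\mu)$: the maximal element is $\mu^\diamond$ itself and the minimum is the unique basic class $[b_0] \in B(\bG,\mu)$, for which $t^{\underline\mu}$ (after multiplication by a length-zero element) is a $\s$-straight representative that visibly lies in $\Adm(\mu)$. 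The inductive step produces, given a non-basic $[b]$, an explicit chain of $\s$-cyclic shifts transporting a known $\s$-straight representative of a smaller class to a length-minimal element of $[b]$, all the while staying inside $\Adm(\mu)$.

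The main obstacle is precisely this last point: controlling membership in $\Adm(\mu)$ along an explicit chain of cyclic shifts that realizes the $\s$-conjugacy relation in $\tW$. This is the combinatorial heart of the argument; it requires a Geck--Pfeiffer-type reduction for cyclic shifts adapted to extended affine Weyl groups, together with an analysis of the admissible set going beyond its definition as a Bruhat lower set, so that one can guarantee the intermediate cyclic-shift conjugates never exit $\Adm(\mu)$.
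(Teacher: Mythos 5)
The paper gives no proof of Theorem \ref{KR-conj}: it is quoted from \cite[Theorem A]{He-KR}, so what you have sketched is a proof of that theorem itself, and it must be judged as such. Your reduction of the hard direction is fine as far as it goes: since $X_w(b)$ depends on $b$ only through $[b]$ and $X_w(\dot w)$ always contains the base point $\breve\CI$, everything comes down to exhibiting $w\in\Adm(\mu)$ with $\dot w\in[b]$ (minimal length is irrelevant for this nonemptiness step; $\s$-straightness matters only because it lets one identify the class $[\dot w]$). But this is exactly the group-theoretic heart of \cite{He-KR}, and your proposal does not prove it: you flag it yourself as ``the main obstacle'', and the device you propose for it cannot work. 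A chain of $\s$-cyclic shifts stays inside a single $\s$-conjugacy class of $\tW$, and by the classification of $B(\bG)$ via $\s$-straight $\s$-conjugacy classes of $\tW$, the straight class attached to a smaller element $[b']$ of $B(\bG,\mu)$ is disjoint from the one attached to $[b]$; hence no sequence of cyclic shifts can ``transport a $\s$-straight representative of a smaller class to a length-minimal element of $[b]$'', and the proposed induction along the partial order on $B(\bG,\mu)$ has no valid inductive step. The Hodge--Newton reduction step likewise uses, without proof, a nontrivial compatibility between the admissible set for a proper Levi and $\Adm(\mu)$ for $\bG$ (cf.\ \cite{HH}). In the cited proof these points are handled by the classification via $\s$-straight elements together with $P$-alcove elements and the partial conjugation method, not by a cyclic-shift induction.

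Two smaller inaccuracies. The basic class in $B(\bG,\mu)$ is represented by the length-zero element $\tau\in\tW$ with $\k(\tau)=\k(\mu)$, which lies in $\Adm(\mu)$ because it is the minimal element of its coset under the affine Weyl group; it is not ``$t^{\underline\mu}$ after multiplication by a length-zero element''. And in the forward direction, \cite{RR} gives Mazur's inequality at (hyper)special level, so at Iwahori level an extra argument is still needed --- e.g.\ combining $w\le t^{x(\underline\mu)}$ with closure relations and monotonicity of Newton points, or invoking the results of \cite{He14} on which $\s$-conjugacy classes meet $\breve\CI\dot w\breve\CI$. As it stands, your proposal is a plausible outline of the known strategy whose combinatorial core is missing and whose proposed substitute for that core fails.
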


\section{Quantum Bruhat graphs and admissible sets}\label{sec:qbg-adm}

\subsection{Quantum Bruhat graph} We first recall the quantum Bruhat graph introduced by Fomin, Gelfand and Postnikov in \cite{FGP}. By definition, a quantum Bruhat graph $\G_{\Phi}$ is a directed graph with 
\begin{itemize}
\item vertices given by the elements of $W_0$; 

\item upward edges $w\rightharpoonup w s_\a$ for some $\a \in \Phi^+$ with $\ell(w s_\a)=\ell(w)+1$;

\item downward edges $w \rightharpoondown w s_\a$ for some $\a \in \Phi^+$ with $\ell(w s_\a)=\ell(w)-\<\a^\vee, 2 \rho\>+1$. 
\end{itemize}

The weight of an upward edge is defined to be $0$ and the weight of a downward edge $w \rightharpoondown w s_{\a}$ is defined to be $\a^{\vee}$. The weight of a path in $\G_{\Phi}$ is defined to be the sum of weights of the edges in the path. For any $x,y\in W_0$, we denote by $d_\G(x, y)$ the minimal length among all paths in $\G_{\Phi}$ from $x$ to $y$. The following result is proved by Postnikov in \cite[Lemma 1]{Pos05}. 

\begin{lemma}\label{wt-x-y}
Let $x,y\in W_0$ be any elements. Then 
\begin{enumerate}
\item There exists a directed path in $\G_{\Phi}$ from $x$ to $y$.

\item All the shortest paths in $\G_{\Phi}$ from $x$ to $y$ have the same weight, which we denote by $\text{wt}(x, y)$. 

\item Any path in $\G_{\Phi}$ from $x$ to $y$ has weight $\ge \text{wt}(x,y)$.
\end{enumerate}
\end{lemma}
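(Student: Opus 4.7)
The plan is to follow Postnikov's original argument from \cite{Pos05}, treating parts (1), (2), (3) by a combination of explicit path construction and an inductive local confluence property of $\G_{\Phi}$. For part (1), I would build a directed path $x \to y$ by routing through the longest element $w_0$. Starting from $x$, I pick a reduced expression $x\i w_0 = s_{i_1} \cdots s_{i_m}$; then $x \rightharpoonup x s_{i_1} \rightharpoonup x s_{i_1} s_{i_2} \rightharpoonup \cdots \rightharpoonup w_0$ is a valid upward path, since each step increases length by $1$. Symmetrically, writing $w_0\i y = s_{j_1} \cdots s_{j_k}$ reduced and using $\ell(w_0 v)=\ell(w_0)-\ell(v)$, the path $w_0 \rightharpoondown w_0 s_{j_1} \rightharpoondown \cdots \rightharpoondown y$ consists of downward edges: for a simple root $\alpha$ one has $\langle \alpha^\vee, 2\rho\rangle = 2$, so each step satisfies the length equality $-1 = -\langle \alpha^\vee, 2\rho\rangle + 1$ required by the definition. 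Concatenating gives (1).

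For parts (2) and (3), I would fix a shortest path and call its weight $\text{wt}(x,y)$, then prove by induction on $d_\G(x,y)$ that every path from $x$ to $y$ has weight $\ge \text{wt}(x,y)$, with equality for all shortest paths. The inductive step reduces to the following confluence lemma: given two distinct edges $x \to u$ and $x \to u'$ out of $x$, of weights $\beta$ and $\beta'$, there is a common vertex $z$ together with shortest paths $u \to \cdots \to z$ and $u' \to \cdots \to z$ satisfying $\beta + \text{wt}(u,z) = \beta' + \text{wt}(u',z)$. Applied iteratively to the first point of divergence between two paths, this allows one to replace any path from $x$ to $y$ by a shortest path of no greater weight, yielding (2) and (3) simultaneously.

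The main obstacle is the confluence lemma itself, which requires a case analysis based on whether each of the two outgoing edges is upward or downward; the ``down-down'' case in particular relies on nontrivial root-system identities relating $\langle \alpha^\vee, 2\rho\rangle$, $\langle \beta^\vee, 2\rho\rangle$ and $\ell(s_\alpha s_\beta)$. The most efficient way to organize the argument, which I would adopt, is via the \emph{$t$-tilted Bruhat order} $\le_t$ for a fixed base vertex $t\in W_0$, characterized by $u \le_t v$ iff some shortest path from $t$ to $v$ in $\G_{\Phi}$ passes through $u$; its covering relations coincide with the edges of $\G_{\Phi}$ lying on shortest paths out of $t$. The confluence then follows from standard lattice-theoretic properties of $\le_t$, which are verified directly using Coxeter-theoretic identities for inversions. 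Throughout, I would work within the relative root system $\Phi$ of the quasi-split group $\bG$ fixed in \S\ref{sec:notation}, so that the resulting weight function is compatible with the Iwahori-Weyl setting used in the later applications.
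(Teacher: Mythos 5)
The paper does not actually prove this lemma: it is quoted verbatim from Postnikov \cite[Lemma 1]{Pos05}, who in turn leans on Brenti--Fomin--Postnikov for the nontrivial parts, so there is no in-paper argument to compare against. Your construction for part (1) is correct and is the standard one: a reduced word for $x^{-1}w_0$ gives an all-upward path $x\to w_0$, and since $\langle\alpha^\vee,2\rho\rangle=2$ for a simple root $\alpha$, a reduced word for $w_0^{-1}y$ gives an all-downward path $w_0\to y$.

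For (2) and (3), naming the tilted Bruhat order of Brenti--Fomin--Postnikov is the right instinct, but the reduction you sketch has a genuine gap. An induction on $d_{\Gamma}(x,y)$ does not close as set up: if $x\to u$ starts a shortest path to $y$ then $d_{\Gamma}(u,y)=d_{\Gamma}(x,y)-1$ and the inductive hypothesis applies, but if $x\to u'$ is the first edge of a competing path there is no reason for $d_{\Gamma}(u',y)$ to be smaller than $d_{\Gamma}(x,y)$, so you cannot invoke it on that branch. Likewise, ``applied iteratively \ldots replace any path from $x$ to $y$ by a shortest path of no greater weight'' presupposes both that the replacement process terminates and that it stays among paths ending at $y$; neither is obvious, because $\Gamma_{\Phi}$ is strongly connected (by your own part (1)), hence full of directed cycles, and your stated confluence only produces a common vertex $z$ reachable from $u$ and $u'$ with matching weights --- $z$ need not lie between the divergence point and $y$. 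The content of the lemma really lives in the structural result you set aside: the existence and compatibility of joins in the tilted order $\le_t$ along shortest paths from a fixed base $t$, which Brenti--Fomin--Postnikov establish by the root-system case analysis you allude to. Until that is supplied, parts (2) and (3) are not proved, only reduced to an assertion of comparable difficulty.
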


\subsection{Bruhat order on $\tW$}


In the rest of this section, we assume that the affine Dynkin diagram of $\tW$ is connected. For any dominant $\underline \l \in X_*(T)_{\G_0}$,  we define the depth\footnote{The meaning of depth is different from the one used in \cite{BBNW}.} of $\underline\l$ to be $$\text{depth}(\underline \l)=\text{min}\{\<\underline\l,\a\>; \a\in \Delta\}.$$

Lam and Shimozono in \cite[Proposition 4.4]{LS} proved the following relation between the quantum Bruhat graph and the Bruhat order for the elements in $\tW$ with  ``sufficiently regular'' translation part. The explicit bound of $\underline \l$ was given by Mili\'cevi\'c in \cite[Proposition 4.2]{Mil}. 

\begin{proposition}\label{qbg-bo} Assume that $W_0$ is an irreducible Weyl group. 
Suppose that $$ \text{depth}(\underline \l)\ge \begin{cases}
2\ell(w_0)+2, & \text{if } W_0 \text{ is not of type }G_2;\\
3\ell(w_0)+3, & \text{if } W_0 \text{ is of type }G_2.
\end{cases}
$$ 
Let $x, y \in W_0$ and $w=x t^{\underline \l} y$. Let $w' \in \tW$. Then $w' \lessdot w$ (i.e., $w' \le w$ and $\ell(w')=\ell(w)-1$) if and only if $w'$ is one of the following 
\begin{enumerate}
\item $w'=x s_\a t^{\underline \l} y$ for some $\a \in \Phi^+$ with $x s_\a \rightharpoonup x$; 

\item $w'=x s_\a t^{\underline \l-\a^\vee} y$ for some $\a \in \Phi^+$ with $x s_\a \rightharpoondown x$; 

\item $w'=x t^{\underline \l} s_\a y$ for some $\a \in \Phi^+$ with $y\i \rightharpoonup  y\i s_\a$; 

\item $w'=x t^{\underline \l-\a^\vee} s_\a y$ for some $\a \in \Phi^+$ with $y\i \rightharpoondown  y\i s_\a$. 
\end{enumerate}
\end{proposition}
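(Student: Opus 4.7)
\medskip

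\noindent\textbf{Proof plan.}
The starting point is the length formula in $\tW$: for $\underline\l$ dominant with sufficiently large depth, we have
$$\ell(xt^{\underline\l}y)=\ell(x)-\ell(y)+\langle\underline\l,2\rho\rangle,$$
since each positive root contributes $\langle\underline\l,\a\rangle-\epsilon_{x,y}(\a)$ to the Iwahori--Matsumoto sum, and deep dominance of $\underline\l$ forces every absolute value to open with positive sign. The depth bound in the statement is exactly what is needed for this ``no sign change'' regime to be preserved after a single Bruhat step.

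\medskip

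\noindent The second ingredient is that every reflection in $\tW$ has the form $s_{\b,k}=s_\b t^{-k\b^\vee}$ for $\b\in\Phi$, $k\in\BZ$, and any Bruhat cover $w'\lessdot w$ arises as $w'=ws_{\b,k}$ for some such reflection (we may also use left multiplication $w'=s_{\b,k}w$ to produce the two cases modifying the $x$-side). A direct computation gives
$$w\cdot s_{\b,k}=xt^{\underline\l+ky\b^\vee}(ys_\b),\qquad s_{\b,k}\cdot w=(xs_{x^{-1}\b})t^{\underline\l\pm\cdots}y,$$
so up to renaming, the candidate covers are parameterized by a choice of $\a\in\Phi^+$ acting on the left or right and a translation shift $k\in\{0,-1\}$.

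\medskip

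\noindent Now I would plug both $w$ and the candidate $w'$ into the length formula. Writing $\ell(w')=\ell(w)-1$ and comparing, the equation splits into two regimes according to $k$. For $k=0$, the condition reduces to $\ell(\cdot\, s_\a)-\ell(\cdot)=\pm 1$, which is exactly the defining condition of an upward QBG edge; this produces cases (1) and (3). For $k=-1$, the translation contributes $-\langle\a^\vee,2\rho\rangle$ to the length difference, and the length equation becomes $\ell(\cdot\, s_\a)-\ell(\cdot)=\langle\a^\vee,2\rho\rangle-1$ with the opposite sign on $\ell(\cdot s_\a)$, which is precisely the defining condition of a downward QBG edge in the direction $\a$; this produces cases (2) and (4). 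Conversely, each of the four candidates $(1)$--$(4)$ is verified to be an element $w'$ of length $\ell(w)-1$ by re-applying the length formula, and the cover relation $w'\le w$ follows from exhibiting the explicit reflection realizing the relation.

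\medskip

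\noindent The main obstacle is the quantitative control: one must check that for \emph{each} of the four candidate moves, the new translation part $\underline\l$ or $\underline\l-\a^\vee$ remains within the ``deep enough'' region so that the length formula still applies to $w'$. Since $\a^\vee$ can be a highest coroot, and since cases (2) and (4) can further involve iterated application in downstream arguments, the depth must dominate $2\ell(w_0)$ (respectively $3\ell(w_0)$ in type $G_2$, where $\langle\a^\vee,2\rho\rangle$ can be as large as $10$ and the QBG edges can move further). This is the sharp bound obtained by Mili\'cevi\'c refining the qualitative statement of Lam--Shimozono. Once both directions of the equivalence have been verified within this regime, the proposition follows.
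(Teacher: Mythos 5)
The paper does not prove this proposition: it is cited verbatim from Lam--Shimozono \cite[Prop.\ 4.4]{LS}, with the explicit depth bound supplied by Mili\'cevi\'c \cite[Prop.\ 4.2]{Mil}. So there is no in-paper argument to compare your sketch against; I can only assess the sketch on its own terms. Your outline does follow the standard route used in those references: use the Iwahori--Matsumoto length formula in the deep-dominant regime $\ell(xt^{\underline\l}y)=\ell(x)-\ell(y)+\langle\underline\l,2\rho\rangle$, parameterize covers by affine reflections, and argue that under the depth hypothesis only reflections with small translation index can drop the length by exactly one, whence the four cases are read off by comparing with the QBG edge conditions.

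Two points are more delicate than you make them sound, and they are exactly where the quantitative work lives. First, the claim ``$k\in\{0,-1\}$'' only holds after you split into right multiplications $ws_{\beta,k}$ (which yield cases (3) and (4)) and left multiplications $s_{\gamma,m}w$ (which yield (1) and (2)); a cover written solely as $ws_{\beta,k}$ may have $|k|$ of size roughly $\langle\underline\l,\alpha\rangle$, which is large precisely because $\underline\l$ is deep, and it is only its rewriting on the other side that has small index. Second, the real content of Mili\'cevi\'c's bound is the uniform estimate that $w'\lessdot w$ forces the dominant translation part $\underline\l'$ of $w'$ to satisfy $\underline\l'\le\underline\l$ and $\langle\underline\l-\underline\l',2\rho\rangle\le 2\ell(w_0)+1$ (since each of $\ell(w)$, $\ell(w')$ is within $\ell(w_0)$ of $\langle\underline\l,2\rho\rangle$, $\langle\underline\l',2\rho\rangle$), so the depth assumption carries over to $w'$ and the length formula may be applied to \emph{both} $w$ and $w'$; that is what pins down $k$. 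You correctly flag this as ``the main obstacle'', but you do not carry it out, so what you have is a correct plan rather than a proof. (A minor factual slip: in type $G_2$ the maximum of $\langle\alpha^\vee,2\rho\rangle$ over positive roots is $6$, not $10$; the $G_2$ constant $3\ell(w_0)+3$ comes from the larger Cartan integers, not from the length of $s_\theta$.)
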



We obtain the following description of certain elements in $\Adm(\mu)$. 

\begin{proposition}\label{adm-set}
Assume that $W_0$ is an irreducible Weyl group. 
Let $\underline \l \in X_*(T)_{\G_0}$ be dominant. Suppose that for any dominant $\underline \l'  \in X_*(T)_{\G_0}$ with $\underline \l \le \underline \l' \le \underline \mu$, we have \[\tag{*} \text{depth}(\underline\l')\ge \begin{cases}
2\ell(w_0)+2, & \text{if } W_0 \text{ is not of type }G_2;\\
3\ell(w_0)+3, & \text{if } W_0 \text{ is of type }G_2.
\end{cases}
\]
Then for any $x, y \in W_0$, $x t^{ \underline\l} y \in \Adm(\mu)$ if and only if there exists a path in the quantum Bruhat graph $\G_{\Phi}$ from $x$ to $y \i$ with weight $\underline \mu-\underline \l$. 
\end{proposition}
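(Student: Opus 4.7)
The plan is to prove both implications by induction, using Proposition \ref{qbg-bo} as a dictionary translating between Bruhat covers for elements $x't^{\underline{\lambda}'}y'$ (with $\underline{\lambda}'$ dominant of sufficient depth) and edges of the quantum Bruhat graph $\Gamma_{\Phi}$. The central translation principle is: cover relations (1) and (3) of Proposition \ref{qbg-bo} correspond to adjoining (at the head, resp.\ tail) an upward edge of $\Gamma_{\Phi}$ of weight $0$ while keeping the lattice part fixed; cover relations (2) and (4) correspond to adjoining a downward edge of weight $\alpha^{\vee}$ while decreasing the lattice part by $\alpha^{\vee}$. Under this dictionary, the weight of the QBG path tracks $\underline{\mu}-\underline{\lambda}$ tautologically.

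For the $(\Leftarrow)$ direction, I would induct on the length of the given path in $\Gamma_{\Phi}$ from $x$ to $y^{-1}$ of weight $\underline{\mu}-\underline{\lambda}$. The base case is the empty path, which forces $x = y^{-1}$ and $\underline{\lambda} = \underline{\mu}$, so $xt^{\underline{\lambda}}y = t^{x(\underline{\mu})} \in \Adm(\mu)$ tautologically. For the inductive step I would strip off the first edge $x \to v$: if it is upward with $v = xs_{\alpha}$, then the induction hypothesis applied to the remaining subpath yields $vt^{\underline{\lambda}}y \in \Adm(\mu)$, and Proposition \ref{qbg-bo}(1) (applied to $vt^{\underline{\lambda}}y$) delivers the cover $xt^{\underline{\lambda}}y \lessdot vt^{\underline{\lambda}}y$, so $xt^{\underline{\lambda}}y \in \Adm(\mu)$ by downward closure of the admissible set. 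The downward-edge case is analogous, using Proposition \ref{qbg-bo}(2) with the lattice part $\underline{\lambda}+\alpha^{\vee}$.

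For the $(\Rightarrow)$ direction, given $xt^{\underline{\lambda}}y \le t^{z(\underline{\mu})} = zt^{\underline{\mu}}z^{-1}$ for some $z \in W_0$, I would fix a saturated chain $xt^{\underline{\lambda}}y = w_k \lessdot \cdots \lessdot w_0 = zt^{\underline{\mu}}z^{-1}$, write each $w_i = x_i t^{\underline{\lambda}_i}y_i$, and construct by forward induction on $i$ a path $P_i$ in $\Gamma_{\Phi}$ from $x_i$ to $y_i^{-1}$ of weight $\underline{\mu}-\underline{\lambda}_i$. The base $P_0$ is the empty path from $z$ to $z$ of weight $0$. At each step, I read off which of cases (1)--(4) of Proposition \ref{qbg-bo} describes the cover $w_{i+1}\lessdot w_i$ and prepend or append the corresponding edge to $P_i$; the terminal $P_k$ is then the desired path.

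The hard part will be verifying that Proposition \ref{qbg-bo} is applicable at each step of both inductions: its hypothesis demands a dominant lattice part of sufficient depth, whereas a priori an intermediate lattice part $\underline{\lambda}_i$ might be non-dominant or shallow. This is precisely what the hypothesis $(*)$ is designed to control. The quantity $|\langle \alpha^{\vee}, \alpha_j\rangle|$ for $\alpha \in \Phi^+$ and simple $\alpha_j$ is bounded by a small constant (at most $3$), which is dwarfed by the required depth $2\ell(w_0)+2$ (or $3\ell(w_0)+3$ in type $G_2$); so any single step of the induction preserves dominance, and iterating, all dominant representatives of intermediate lattice parts remain in the interval $[\underline{\lambda},\underline{\mu}]$, where $(*)$ guarantees the required depth uniformly. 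Carefully maintaining this invariant along the chain is the principal technical check of the argument.
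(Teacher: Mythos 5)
Your proposal follows essentially the same route as the paper's proof: both use Proposition~\ref{qbg-bo} as a dictionary between Bruhat covers in $\tW$ and edges of $\G_\Phi$, applied edge-by-edge (resp.\ cover-by-cover) in both directions, with the depth hypothesis $(*)$ invoked to keep the dictionary valid at every intermediate step. One remark on your ``principal technical check'': for the forward direction the paper avoids Cartan-number estimates entirely and instead uses the cleaner fact that along a saturated chain $w_i \lessdot w_{i+1}$ the \emph{dominant} translation parts satisfy $\underline\l_i \le \underline\l_{i+1}$, so each $\underline\l_i$ automatically lies in the interval $[\underline\l,\underline\mu]$ where $(*)$ applies directly; since each $\underline\l_i$ is by definition the dominant representative, there is no need to argue that dominance is ``preserved.''
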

\begin{proof}
Let $w=xt^{\underline \l}y\in\tW $. Suppose $w\in\Adm(\mu)$, then there is some $u\in W_0$ such that $w\le ut^{\underline \mu}u\i$. Thus there exists a chain of elements $$w=w_1 \lessdot w_2 \lessdot \cdots \lessdot w_n=u t^{\underline \mu} u \i.$$ 

Let $\underline \l_i  \in X_*(T)_{\G_0}$ be dominant such that $w_i \in W_0 t^{\underline \l_i} W_0$. Since $w_i \lessdot w_{i+1}$, we have that $\underline \l_i \le \underline \l_{i+1}$ for $1 \le i \le n-1$. In particular, $\underline \l \le \underline \l_i \le \underline \mu$ for $1 \le i \le n-1$. By our assumption on $\underline\l$, Proposition \ref{qbg-bo} is applicable to all the covering relations $w_i \lessdot w_{i+1}$. Therefore, $\underline \mu-\underline \l$ equals to the sum of the weight of a path in $\G_{\Phi}$ from $x$ to $u$ and the weight of a path in $\G_{\Phi}$ from $u$ to $y \i$. The concatenation of these two paths gives a path from $x$ to $y \i$ with weight $\underline \mu-\underline \l$. 

Conversely, given a path in $\G_{\Phi}$ from $x$ to $y\i$ with weight $\underline \mu-\underline \l$, by Proposition \ref{qbg-bo}, one may construct a chain of elements $$w=w_1 \lessdot w_2 \lessdot \cdots \lessdot w_n=x t^{\underline \mu} x \i.$$ In particular, $w \in \Adm(\mu)$. 
\end{proof}

\subsection{An explicit bound on $\underline\mu$}\label{explicit-bound} Finally, we discuss the assumption (*) in Proposition \ref{adm-set}. 

Suppose that $$\text{depth}(\underline\mu)\ge \begin{cases}
4 \ell(w_0)+2, & \text{if }  W_0 \text{ is not of type }G_2;\\
5 \ell(w_0)+3, & \text{if }  W_0 \text{ is of type }G_2.\\
\end{cases}
$$ 
We claim that

(a) for any dominant cocharacter $\underline{\l} \le \underline\mu$ with $\<\underline\mu-\underline\l, \rho\> \le \ell(w_0)$, the assumption (*) in Proposition \ref{adm-set} is satisfied. 

Indeed, we have $\<\underline\mu-\underline\l', \rho\> \le \ell(w_0)$ for any $\underline \l'$ with $\underline\l \le \underline\l' \le \underline\mu$. Then $\underline\mu-\underline\l'=\sum_{\a \in \D} n_\a \a^\vee$ for some $n_\a \in \BN$ with $\sum_{\a \in \D} n_\a \le \ell(w_0)$. Then for any $\b \in \D$, $$\<\underline\l', \b\>=\<\underline\mu, \b\>-\sum_{\a \in \D} n_{\a} \<\a^\vee, \b\> \ge \<\underline\mu, \b\>-2 \ell(w_0).$$ 

Thus, $$\text{depth}(\underline\l') \ge  \begin{cases}
2\ell(w_0)+2, & \text{if }  W_0 \text{ is not of type }G_2;\\
3\ell(w_0)+3, & \text{if }  W_0 \text{ is of type }G_2.\\
\end{cases}$$
This proves the claim.

\section{A formula on the virtual dimension}

\subsection{Virtual dimension $d_w(b)$}\label{sec:vir}
We recall the definition of virtual dimension in \cite[\S10.1]{He14}. 

Note that any element $w \in \tW$ may be written in a unique way as $w=x t^  {\underline{\l}} y$ with $\underline\l$ dominant, $x, y \in W_0$ such that $t^{\underline{\l}} y \in {}^{\BS} \tW$. In this case, we set $$\eta_\s(w) = \s^{-1}(y) x.$$

Let $\mathbf J_b$ be the reductive group over $F$ with $$\mathbf J_b(F)=\{g \in \breve G; g b \s(g) \i=b\}.$$ The \emph{defect} of $b$ is defined by $$\text{def}_{\bG}(b)=\rank_F \bG-\rank_F \mathbf J_b.$$ Here for a reductive group $\mathbf H$ defined over $F$, $\rank_F$ is the $F$-rank of the group $\mathbf H$. 

The \emph{virtual dimension} is defined to be $$d_{w}(b)=\frac 12 \big( \ell(w) + \ell(\eta_\s(w)) -\text{def}_{\bG}(b)  \big)-\<\nu_b, \rho\>.$$

The following result is proved in \cite[Corollary 10.4]{He14} for residually split groups and proved in \cite[Theorem 2.30]{He-CDM} for the general case. 

\begin{theorem}\label{dim-vir}
Let $b \in \breve G$ and $w \in \tW$. Then $\dim X_{w}(b) \le d_{w}(b)$. 
\end{theorem}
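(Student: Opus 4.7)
The plan is to argue by induction on $\ell(w)$, using a Deligne-Lusztig style reduction. For any simple affine reflection $s \in \tS$, I would compare $X_w(b)$ with $X_{sw\s(s)}(b)$ and $X_{sw}(b)$ by decomposing $\breve\CI \dot w \breve\CI$ via $s$. Two cases arise: if $\ell(sw\s(s)) = \ell(w)$ one gets a universal homeomorphism $X_w(b) \simeq X_{sw\s(s)}(b)$; if $\ell(sw\s(s)) = \ell(w)-2$ then $X_w(b)$ splits as the disjoint union of a $\mathbb{G}_m$-bundle over $X_{sw\s(s)}(b)$ and an $\mathbb{A}^1$-bundle over $X_{sw}(b)$, giving
\[
\dim X_w(b) \le \max\{1+\dim X_{sw\s(s)}(b),\ 1+\dim X_{sw}(b)\}.
\]

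The heart of the argument is a parallel monotonicity statement for the virtual dimension: $d_{sw\s(s)}(b) \le d_w(b)$ in the length-preserving case, and $d_{sw\s(s)}(b),\, d_{sw}(b) \le d_w(b)-1$ in the length-decreasing case. Since $\text{def}_{\bG}(b)$ and $\nu_b$ are constant throughout, this reduces to showing that the combined quantity $\ell(w) + \ell(\eta_\s(w))$ decreases by $0$ or $2$ under each reduction step. Writing $w = x t^{\underline\lambda} y$ with $t^{\underline\lambda} y \in {}^{\BS}\tW$, so that $\eta_\s(w) = \s^{-1}(y)\, x \in W_0$, I would analyze how left/right multiplication by a simple affine reflection perturbs the triple $(x, \underline\lambda, y)$ and hence the product $\s^{-1}(y)\, x$.

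Induction terminates at the $\s$-straight elements of $\tW$ (those of minimal length in their $\s$-conjugacy class), since every element of $\tW$ can be reached from such a representative by iterated conjugation by simple reflections. For a $\s$-straight element $w'$, the variety $X_{w'}(b)$ is empty unless $[b] = [\dot w']$; when nonempty it is a disjoint union of classical Deligne-Lusztig varieties sitting inside a $\s$-stable finite-type reductive quotient of a parahoric, of dimension $\ell(w') - \<2\rho, \nu_{\dot w'}\>$. A direct computation then verifies the identity $\ell(w') - \ell(\eta_\s(w')) = 2\<\nu_{\dot w'},\rho\> - \text{def}_{\bG}(\dot w')$, equivalent to $d_{w'}(\dot w') = \dim X_{w'}(\dot w')$, which closes the induction at the sharp base case.

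The principal obstacle is the combinatorial bookkeeping behind the monotonicity of $\ell(w) + \ell(\eta_\s(w))$. The normal form $w = x t^{\underline\lambda} y$ is not stable under left or right multiplication by a simple reflection, because the minimality constraint $t^{\underline\lambda} y \in {}^{\BS}\tW$ can force a nontrivial rearrangement of factors before the new $\eta_\s$ can be read off. The effect on $\s^{-1}(y)\, x$ must therefore be analyzed in several subcases, distinguished by whether the reflection is finite or genuinely affine and by how it interacts with $\s$; the extra care required to handle the quasi-split (nonsplit) interaction with $\s$ is precisely what separates the residually split argument of \cite{He14} from the general treatment in \cite{He-CDM}.
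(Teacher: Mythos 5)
Your sketch reproduces the essential architecture of the proof cited in the paper (from \cite{He14} and \cite{He-CDM}): Deligne--Lusztig reduction, a matching monotonicity of the virtual dimension along each reduction step, and termination of the induction at the minimal-length elements in $\s$-conjugacy classes. The core combinatorial point you identify is indeed the crux; and the inequality $\ell(\eta_\s(sw)) \le \ell(\eta_\s(w))-1$ in the length-decreasing case is correct in the ``good'' subcase ($s\in\BS$, normal form stable), because $\eta_\s(sw) = \eta_\s(w)\cdot (x^{-1}sx)$ and the conditions $sx<x$, $y\s(s)>y$ translate to $y(\a_s)>0$, which is exactly the condition for $\eta_\s(w)$ to decrease upon multiplication by the reflection $x^{-1}sx$.

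That said, two differences from the cited treatment are worth flagging. First, He phrases the bound through class polynomials rather than through a direct dimension induction: the key statement proved is essentially $\deg f_{w,\CO} + \ell(\CO_{\min}) \le \ell(\eta_\s(w))$ for each $\s$-conjugacy class $\CO$, combined with the dimension formula $\dim X_w(b) = \max_{\CO}\bigl\{\tfrac{1}{2}(\ell(w)+\ell(\CO_{\min}))+\deg f_{w,\CO}\bigr\}-\langle\nu_b,2\rho\rangle$ from \cite[Thm.\ 6.1]{He14}. This decouples the combinatorial lemma from the choice of $b$ and organizes the bookkeeping for the cases where the normal form $x t^{\underline\l} y$ does not transform simply (affine simple reflections, small $\underline\l$, nontrivial $\s$-action), which in your sketch is left as the ``principal obstacle.'' Second, a terminological slip: the induction terminates at \emph{minimal-length} elements of $\s$-conjugacy classes, which is a strictly larger set than the $\s$-straight elements (straight means $\ell(w) = \langle\nu_{\dot w},2\rho\rangle$), and for the base case you only need the inequality $\dim X_{w'}(\dot w') \le d_{w'}(\dot w')$; the equality $\ell(w')-\ell(\eta_\s(w')) = 2\langle\nu_{\dot w'},\rho\rangle - \mathrm{def}_{\bG}(\dot w')$ you claim is stronger than what is required and I do not believe it holds for every minimal-length $w'$.
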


Now we set $$d_{\Adm(\mu)}(b)=\max_{w \in \Adm(\mu)} d_{w}(b).$$ As a consequence of Theorem \ref{dim-vir}, we have 

\begin{corollary}\label{cor:4.2}
For any $b$, $\dim X(\mu, b) \le d_{\Adm(\mu)}(b)$. 
\end{corollary}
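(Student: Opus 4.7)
The plan is to deduce this directly from Theorem \ref{dim-vir} by exploiting the finite decomposition $X(\mu,b) = \bigsqcup_{w \in \Adm(\mu)} X_w(b)$. The only thing to check beyond applying the individual bound termwise is that taking a maximum over $\Adm(\mu)$ is legitimate, i.e.\ that $\Adm(\mu)$ is finite and that the decomposition behaves well with respect to dimension.

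First, I would observe that $\Adm(\mu)$ is a finite subset of $\tW$: by definition it is the union, over $x \in W_0$, of the Bruhat intervals $\{w \in \tW : w \le t^{x(\underline\mu)}\}$, and each such interval is finite since every Bruhat interval in $\tW$ is finite. Both the indexing set $W_0$ and the intervals are finite, so $\Adm(\mu)$ is finite.

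Next, each piece $X_w(b)$ is a locally closed subscheme of the affine flag variety $Fl$, and the decomposition $X(\mu,b) = \bigsqcup_{w \in \Adm(\mu)} X_w(b)$ expresses $X(\mu,b)$ as a finite disjoint union of locally closed subschemes of a scheme locally of finite type. For such a finite union the dimension is simply the maximum of the dimensions of the pieces:
\[
\dim X(\mu,b) \;=\; \max_{w \in \Adm(\mu)} \dim X_w(b).
\]
Applying Theorem \ref{dim-vir} to each term gives $\dim X_w(b) \le d_w(b)$ for every $w \in \Adm(\mu)$, whence the maximum on the right is bounded above by $\max_{w \in \Adm(\mu)} d_w(b) = d_{\Adm(\mu)}(b)$, which is the claimed inequality.

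There is no serious obstacle here: the statement is a formal consequence of Theorem \ref{dim-vir} together with the finiteness of $\Adm(\mu)$ and the well-behaved interaction of $\dim$ with finite disjoint unions of locally closed subschemes. The only point that requires any care at all is justifying that $\dim$ of the (locally finite type) union equals the maximum of the dimensions of the pieces, and this is standard once finiteness of $\Adm(\mu)$ is in hand.
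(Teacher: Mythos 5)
Your proof is correct and is essentially the paper's own (implicit) argument: the paper states the corollary as a direct consequence of Theorem \ref{dim-vir} via the finite decomposition $X(\mu,b)=\bigsqcup_{w\in\Adm(\mu)}X_w(b)$, which is exactly what you spell out.
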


\subsection{Some formulas on $W_0$}\label{Subsection 4.2} In this subsection, we give some formulas needed in the proof of Proposition \ref{dim-formula}. 

By definition, if $w\rightharpoonup w s_\a$, then $\ell(w s_\a)=\ell(w)+1=\ell(w)-\<\text{wt}(w, w s_\a), 2 \rho\>+1$.  If $w\rightharpoondown w s_\a$, then $\ell(w s_\a)=\ell(w)-\<\a^\vee, 2 \rho\>+1=\ell(w)-\<\text{wt}(w, w s_\a), 2 \rho\>+1$. Therefore, for any $x, y \in W_0$, we have \[\tag{a} \ell(y)=\ell(x)-\<\text{wt}(x, y), 2 \rho\>+d_{\G}(x, y).\]

By definition, for any simple reflection $s$, we have an edge (either upward or downward) $x \to x s$. Thus for any $x, y \in W_0$, $d_{\G}(x, y) \le \ell(x \i y) \le \ell(w_0)$. Hence $\<\text{wt}(x, y), 2 \rho\>=\ell(x)-\ell(y)+d_{\G}(x, y) \le 2 \ell(w_0)$. Therefore 
\[\tag{b} \<\text{wt}(x, y), \rho\> \le \ell(w_0).\]

\begin{lemma}\label{reach-w}
The maximum of the set $$\{ \ell(\s\i(y)x) -   d_{\G}(x,y\i) ; x,y\in W_0 \}$$ can be achieved when $\s\i(y)x =w_0.$ In other words, 
$$ \max\{ \ell(\s\i(y)x) -   d_{\G}(x,y\i) ; x,y\in W_0 \}= \max\{\ell(w_0)-d_{\G}(x, \s(x)w_0); x\in W_0\}.$$
\end{lemma}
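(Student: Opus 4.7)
The plan is to show that starting from any pair $(x,y) \in W_0 \times W_0$, one can modify $x$ by successive right-multiplications by simple reflections, producing a sequence of pairs whose associated value $\ell(\s\i(y)x) - d_\G(x, y\i)$ weakly increases, and ending at a pair for which $\s\i(y)x = w_0$. Any pair of the latter form has $y = w_0 \s(x)\i$ (using $\s(w_0) = w_0$, which holds because $\s$ permutes $\D$ and hence fixes the longest element) and therefore $y\i = \s(x)w_0$, so this reduction identifies the left-hand maximum with the right-hand one.

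The single replacement step proceeds as follows. Suppose $w := \s\i(y)x \neq w_0$, and pick any simple reflection $s \in \BS$ that is not a right descent of $w$, so that $\ell(ws)=\ell(w)+1$; such an $s$ exists because $w \neq w_0$. Replace $(x,y)$ by $(xs,y)$. The new associated element is $\s\i(y) \cdot xs = ws$, so $\ell(\cdot)$ grows by exactly $1$. For the distance term, observe that for any simple reflection $s$ and any $x \in W_0$ the quantum Bruhat graph $\G_\Phi$ contains a length-$1$ directed edge $xs \to x$: if $\ell(xs)<\ell(x)$ this is an upward edge, while if $\ell(xs)>\ell(x)$ it is a downward edge, with the downward length condition $\ell(x)=\ell(xs)-\<s^\vee,2\rho\>+1$ reducing to $\ell(x)=\ell(xs)-1$ because $\<s^\vee, 2\rho\>=2$ for simple $s$. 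The triangle inequality for the directed distance $d_\G$ then yields $d_\G(xs,y\i) \le d_\G(xs,x) + d_\G(x,y\i) \le 1 + d_\G(x,y\i)$, so the whole value does not decrease under the replacement.

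Iterating the step drives $w$ up to $w_0$ in finitely many moves and produces a pair $(x^\ast, y)$ with $y\i = \s(x^\ast)w_0$, showing $\ell(\s\i(y)x) - d_\G(x, y\i) \le \ell(w_0) - d_\G(x^\ast, \s(x^\ast)w_0)$. The reverse inequality is immediate since each $x^\ast$ on the right-hand side corresponds to the pair $(x^\ast, w_0\s(x^\ast)\i)$ on the left-hand side, for which $\s\i(y)x^\ast = w_0$ and $y\i = \s(x^\ast)w_0$. The only substantive point is the existence of a unit-length edge $xs \to x$ for simple $s$; once that is verified, the triangle inequality in $\G_\Phi$ handles the rest, so I do not anticipate any serious obstacle.
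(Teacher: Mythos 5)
Your proof is correct and follows essentially the same argument as the paper: pick a simple reflection $s$ increasing the length of $\s\i(y)x$, replace $x$ by $xs$, use the fact that every simple reflection gives a length-one edge $xs \to x$ in $\G_\Phi$, and apply the triangle inequality. The only difference is that you spell out the $\<\a^\vee,2\rho\>=2$ computation for the downward-edge case and explicitly state the trivial reverse inclusion, both of which the paper leaves implicit.
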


\begin{proof}
Suppose that $\s\i(y) x \ne w_0.$ Then there exists a simple reflection $s$ with $\s\i(y) xs>\s\i(y)x$. Since $s$ is a simple reflection, we have an edge (upward or downward) $xs  \to x.$ The concatenation of the edge $xs \to x$ and any path from $x$ to $y\i$ gives a path from $xs$ to $y\i.$ Hence $d_{\G}(xs,y\i)\le d_{\G}(x ,y\i)+1.$ It follows that $$\ell(\s\i(y)xs)-d_{\G}(xs,y\i)\ge  \ell(\s\i(y)x)-d_{\G}(x,y\i).$$ Thus, the maximum of $\ell(\s\i(y')x') - d_{\G}(x',(y') \i)$ can be achieved for some $x, y$ with $\s \i(y) x=w_0$. 
\end{proof}

\subsection{Formula of $d_{\Adm(\mu)}(b)$}
We have $W_0=W_{0,1} \times \cdots \times W_{0,l}$, where $W_{0, i}$ are the irreducible Weyl groups.  Any element $x \in W_0$ is of the form $x=(x_1, \ldots, x_l)$ with $x_i \in W_{0, i}$. We have $w_0=(w_{0,1}, \ldots, w_{0,l})$, where $w_{0, i}$ is the longest element of $W_{0, i}$. We write $\underline \mu$ as $\underline \mu=( \underline\mu_1, \ldots,\underline \mu_l)$. We say that $\underline \mu$ is {\it superregular} if for each $i$,
$$\text{depth}(\underline \mu_i)\ge  \begin{cases}
4\ell(w_{0,i})+2, & \text{if }  W_{0,i} \text{ is not of type }G_2;\\
5\ell(w_{0,i})+3, & \text{if }  W_{0,i} \text{ is of type }G_2.\end{cases}$$

It is worth pointing out that the superregularity condition here is weaker than the superregularity condition in \cite[Corollary 3.3]{Mil}, where it requires that $\text{depth}(\underline\mu_i)\ge 8\ell(w_{0,i})$ if $W_{0, i}$ is of classical type and $\text{depth}(\underline\mu_i)\ge 12\ell(w_{0,i})$ if $W_{0, i}$ is of exceptional type. See also \cite{MV} for some applications of that superregularity condition. 

\begin{proposition}\label{dim-formula}
Suppose that $\underline\mu$ is superregular. Then
$$d_{\Adm(\mu)}(b)= \<\underline \mu-\nu_b,\rho \> -\frac{1}{2}\text{def}_{\bG}(b)+\frac{1}{2}\ell(w_0) - \frac{1}{2}\min\{d_{\G}(x,\s(x)w_0);x\in W_0\}.$$
\end{proposition}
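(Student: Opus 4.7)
The plan is to maximize $d_w(b) = \tfrac{1}{2}(\ell(w)+\ell(\eta_\s(w))-\text{def}_\bG(b)) - \<\nu_b,\rho\>$ over $w \in \Adm(\mu)$, reducing to the combinatorial optimization over $W_0$ addressed by Lemma~\ref{reach-w}. Write $w = xt^{\underline\l}y$ with $\underline\l$ dominant and $t^{\underline\l}y \in {}^{\BS}\tW$; since $\underline\mu$ is superregular and, as noted below, the relevant $\underline\l$'s differ from $\underline\mu$ by at most $\<\text{wt}, \rho\> \leq \ell(w_0)$, the cocharacter $\underline\l$ is automatically superregular dominant and the standard length formula yields $\ell(w) = \ell(x) + \<\underline\l, 2\rho\> - \ell(y)$, while by definition $\eta_\s(w) = \s^{-1}(y)x$. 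By Proposition~\ref{adm-set} (applied componentwise to each irreducible factor of $W_0$, with the condition~(*) guaranteed by claim~(a) in~\S\ref{explicit-bound}), the condition $w \in \Adm(\mu)$ is equivalent to the existence of a path in $\G_\Phi$ from $x$ to $y^{-1}$ of weight $\underline\mu - \underline\l$.

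For the upper bound, I would substitute the length formulas into $d_w(b)$ and apply formula~(a) of \S\ref{Subsection 4.2}, which gives $\ell(x) - \ell(y) = \<\text{wt}(x,y^{-1}), 2\rho\> - d_\G(x,y^{-1})$. This yields
$$d_w(b) = \<\underline\l + \text{wt}(x,y^{-1}) - \nu_b,\rho\> - \tfrac{1}{2}\text{def}_\bG(b) + \tfrac{1}{2}\bigl(\ell(\s^{-1}(y)x) - d_\G(x,y^{-1})\bigr).$$
By Lemma~\ref{wt-x-y}, the admissibility condition forces $\underline\mu - \underline\l$ to be at least $\text{wt}(x, y^{-1})$ (since the latter is the weight of a shortest path), so $\<\underline\l + \text{wt}(x,y^{-1}), \rho\> \leq \<\underline\mu, \rho\>$. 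Maximizing over $x,y \in W_0$ and invoking Lemma~\ref{reach-w} produces the upper bound
$$d_w(b) \leq \<\underline\mu-\nu_b,\rho\> - \tfrac{1}{2}\text{def}_\bG(b) + \tfrac{1}{2}\bigl(\ell(w_0) - \min_{x \in W_0} d_\G(x, \s(x)w_0)\bigr).$$

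For the matching lower bound (achievability), I would pick $x^* \in W_0$ minimizing $d_\G(x, \s(x)w_0)$, set $y = w_0 \s(x^*)^{-1}$ so that $\s^{-1}(y)x^* = w_0$ (using $\s(w_0)=w_0$), and take $\underline\l = \underline\mu - \text{wt}(x^*, \s(x^*)w_0)$. By formula~(b) of \S\ref{Subsection 4.2}, $\<\text{wt}(x^*, \s(x^*)w_0), \rho\> \leq \ell(w_0)$, so claim~(a) of \S\ref{explicit-bound} guarantees that $\underline\l$ is dominant and that the hypothesis~(*) of Proposition~\ref{adm-set} holds for every dominant $\underline\l'$ between $\underline\l$ and $\underline\mu$. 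Hence Proposition~\ref{adm-set} applies (the required path is any shortest path from $x^*$ to $\s(x^*)w_0$), showing $w^* := x^* t^{\underline\l} y \in \Adm(\mu)$. A direct computation of $d_{w^*}(b)$ with the length formulas above then matches the upper bound exactly.

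The main technical nuisance is bookkeeping rather than a genuine obstacle: one must ensure that when $W_0 = W_{0,1} \times \cdots \times W_{0,l}$ is reducible and $\s$ permutes the factors, the parameterization of $\Adm(\mu)$ via quantum Bruhat graphs, the length formula, the definition of $\eta_\s$, and Lemma~\ref{reach-w} all factor appropriately over $\s$-orbits of components, and that $\underline\l = \underline\mu - \text{wt}(x^*, \s(x^*)w_0)$ indeed lies in $X_*(T)_{\G_0}$ (which it does, since $\text{wt}$ is a sum of positive coroots). Once these reductions are made, the two bounds combine to give the claimed formula.
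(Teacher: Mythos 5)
Your overall strategy — express $d_w(b)$ in terms of $\ell(\s^{-1}(y)x) - d_\G(x,y^{-1})$ and $\langle \underline\l + \text{wt}(x,y^{-1}), \rho\rangle$ using \S\ref{Subsection 4.2}(a), bound the latter by $\langle \underline\mu, \rho\rangle$ and the former by $\ell(w_0) - \min_x d_\G(x,\s(x)w_0)$ via Lemma~\ref{reach-w}, and then show achievability with $x^*$ minimizing $d_\G(\,\cdot\,,\s(\,\cdot\,)w_0)$ and $y = w_0\s(x^*)^{-1}$ — is the same as the paper's, and your lower-bound construction is correct. However, there is a genuine gap in the upper bound.

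You claim that for every $w = xt^{\underline\l}y \in \Adm(\mu)$, Proposition~\ref{adm-set} applies (giving a path of weight $\underline\mu - \underline\l$ from $x$ to $y^{-1}$, whence $\text{wt}(x,y^{-1}) \le \underline\mu - \underline\l$ by Lemma~\ref{wt-x-y}(3)). But Proposition~\ref{adm-set} requires the depth condition $(*)$ for all dominant $\underline\l'$ between $\underline\l$ and $\underline\mu$, and claim~(a) of \S\ref{explicit-bound} only guarantees $(*)$ when $\langle \underline\mu - \underline\l, \rho\rangle \le \ell(w_0)$. The translation parts $\underline\l$ of arbitrary admissible $w$ can be far from $\underline\mu$ (e.g.\ $\underline\l = 0$, which is certainly in $\Adm(\mu)$), and for such $\underline\l$ neither is $\underline\l$ superregular nor does $(*)$ hold, so you cannot invoke Proposition~\ref{adm-set}. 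Your parenthetical ``as noted below, the relevant $\underline\l$'s differ from $\underline\mu$ by at most $\ell(w_0)$'' is circular for the upper bound: that is a conclusion about where the maximum is attained, not a fact about all $w \in \Adm(\mu)$. The fix, as in the paper, is a case split per irreducible factor: when $\langle\underline\mu_i - \underline\l_i, \rho_i\rangle > \ell(w_{0,i})$, the bound $\langle \text{wt}(x_i,y_i^{-1}), \rho_i\rangle \le \ell(w_{0,i})$ from \S\ref{Subsection 4.2}(b) already gives $\langle \underline\l_i + \text{wt}(x_i,y_i^{-1}), \rho_i\rangle < \langle\underline\mu_i,\rho_i\rangle$ without any appeal to Proposition~\ref{adm-set}; only in the complementary case $\langle\underline\mu_i - \underline\l_i, \rho_i\rangle \le \ell(w_{0,i})$ is Proposition~\ref{adm-set} (via \S\ref{explicit-bound}(a)) available and needed.
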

\begin{proof}
Let $w \in \Adm(\mu)$. Then $w$ is of the form $w=x t^  {\underline{\l}} y$ with $\underline\l$ dominant and $\underline\l\le \underline\mu$, $x, y \in W_0$ such that $t^{\underline{\l}} y \in {}^{\BS} \tW$. 

By \S\ref{Subsection 4.2}(a), we have \begin{align*}	 
	d_{w}(b) & = \frac{1}{2}\left(\ell(x)-\ell(y)+\ell(\s\i(y)x)+\<\underline \l,2\rho \>\right)-\frac{1}{2}\text{def}_{\bG}(b)-\<\nu_b,\rho \>\\ 
	  &=\frac{1}{2}(\ell(\s \i(y) x)-d_{\G}(x, y \i))+\<\text{wt}(x, y\i)+\underline \l, \rho\>-\frac{1}{2}\text{def}_{\bG}(b)-\<\nu_b,\rho \>.
\end{align*}

We show that 

(a) $\<\text{wt}(x, y)+\underline \l, \rho\> \le \<\underline \mu, \rho\>$. 

Write $\underline\l=(\underline\l_1,\ldots, \underline\l_l),$ $x=(x_1,\ldots,  x_l)$, $y=(y_1,\ldots,  y_l)$ and $\rho=(\rho_1, \ldots, \rho_l)$. Then $x_i t^{\underline\l_i}y_i\in\Adm(\mu_i)$. Write $\text{wt}(x, y\i)=(\underline \l'_1, \ldots, \underline \l'_l)$. For any $i$, if $\<\underline \mu_i -  \underline \l_i ,\rho\>  >\ell(w_{0,i})$, then by \S\ref{Subsection 4.2}(b), $\<\underline \l'_i+\underline \l_i, \rho_i\> \le \<\underline \mu_i, \rho_i\>$. If $\<\underline \mu_i -  \underline \l_i ,\rho\> \le \ell(w_{0,i})$, then by \S\ref{explicit-bound} and Proposition \ref{adm-set},  there exists a path in $\G_{\Phi}$ from $x_i$ to $y_i \i$ with weight $\underline \mu_i-\underline \l_i$. By Lemma \ref{wt-x-y} (3), $\text{wt}(x_i, y_i \i) \le \underline \mu_i-\underline \l_i$ and hence we also have $\<\underline \l'_i+\underline \l_i, \rho_i\> \le \<\underline \mu_i, \rho_i\>$. 

Thus (a) is proved.

By Lemma \ref{reach-w}, $\ell(\s \i(y) x)-d_{\G}(x, y \i) \le \ell(w_0)-\min\{d_{\G}(x,\s(x)w_0);x\in W_0\}$. Thus 
$$d_{\Adm(\mu)}(b) \le \<\underline \mu-\nu_b,\rho \> -\frac{1}{2}\text{def}_{\bG}(b)+\frac{1}{2}\ell(w_0) - \frac{1}{2}\min\{d_{\G}(x,\s(x)w_0);x\in W_0\}.$$

On the other hand, for any $x=(x_1, \ldots, x_l) \in W_0$,  we write $\s(x) w_0$ as $\s(x) w_0=(y_1, \ldots, y_l)$ and write $\underline \mu-\text{wt}(x, \s(x) w_0)$ as $(\underline \mu'_1, \ldots, \underline \mu'_l)$. By \S\ref{Subsection 4.2})(b), for any $i$, $\<\underline \mu_i-\underline \mu'_i, \rho_i\> \le \ell(w_{0, i})$. By \S\ref{explicit-bound} and Proposition \ref{adm-set}, $x_i t^{\underline \mu'_i} y_i \i \in \Adm(  \mu_i)$ and hence $x t^{\underline \mu-\text{wt}(x, \s(x) w_0)} w_0 \s(x) \i \in \Adm(\mu)$. By definition, 
$$d_{x t^{\underline \mu-\text{wt}(x, \s(x) w_0)} w_0 \s(x) \i}(b)=\<\underline\mu-\nu_b,\rho \>-\frac{1}{2}\text{def}_{\bG}(b)+\frac{1}{2}(\ell(w_0)-d_{\G}(x, \s(x)w_0)) .$$

This finishes the proof. 
\end{proof}

\section{Quantum Bruhat graphs and reflection lengths}

\subsection{Reflection length}\label{sec:refl} In this section, we assume that $W_0$ is a finite Coxeter group. This includes the finite Weyl groups we consider in the other sections in this paper, and also includes the finite Coxeter groups of type $H_3$, $H_4$ and $I_m$, which are of independent interest. 

Let $w \in W_0$. The {\it reflection length} of $w$ is the smallest number $l$ such that $w$ can be written as a product of $l$ reflections in $W$. We denote by $\ell_R(w)$ the reflection length of $w$. Since any simple reflection is a reflection, we have $\ell_R(w) \le \ell(w)$. For any subset $C$ of $W_0$, we write $$\ell_R(C)=\min\{\ell_R(w); w \in C\}.$$ 

Let $V$ be the reflection representation of $W_0$. By \cite[Lemma 2]{Car}, we have $\ell_R(w)=\dim V-\dim V^w$. In this paper, we are mainly interested in the reflection length of $w_0$. Below we list the explicit reflection length for all the simple groups. 

\begin{center}
\begin{tabular}{ |c|c|c|c|c|c|c|c|c|c|c|c|} 
 \hline
 Type & $A_n$ & $B_n/C_n$ & $D_n$ & $E_6$ & $E_7$ & $E_8$ & $F_4$ & $G_2$ &  $ H_3$  & $H_4$  & $I_m$\\ 
 \hline
 $\ell_R(w_0)$ & $\lceil \frac{n}{2} \rceil$ & $n$ & $2 \lfloor \frac{n}{2} \rfloor$ & $4$ & $7$ & $8$ & $4$ & $2$ & $3$  &  $4$  & $ \begin{cases}2,&\text{if }2\mid m \\     1,&\text{if } 2\nmid m\end{cases}$  \\
 \hline
\end{tabular}
\end{center}

\smallskip

Let $\CO$ be the $\s-$conjugacy class of $w_0$. In the rest of this subsection, we will discuss $\ell_R(\CO)$ for irreducible Weyl groups. 

If $\s=\Ad(w_0)$, then $\CO=\{w_0\}$ and thus $\ell_R(\CO)=\ell_R(w_0)$. 

If $\s=\id$, then $\CO$ is an ordinary conjugacy class and thus $\ell_R(z)=\ell_R(w_0)$ for any $z \in \CO$. 

If $\s$ is neither $\Ad(w_0)$ nor $\id$, then $(W_0, \s)$ is of type ${}^2 D_{2k}$, ${}^3 D_4$, ${}^2 F_4$ or ${}^2I_{2k}$ (here the superscript denotes the order of $\s$). In the ${}^2 D_{2k}$ case, we regard the elements in $W_0$ as the permutations on $\{\pm 1, \pm 2, \ldots, \pm 2m\}$ which commute with $-1$ and send even number of positive integers to negative integers. It is easy to see that $\ell_R(\CO)=2m-2$. In the ${}^2I_{2k}$ case, it is easy to see that $1\in\CO$ and $\ell_R(\CO)=0$. By direct computation, in the ${}^3 D_4$ case, $\ell_R(\CO)=2$ and in the ${}^2 F_4$ case, $1 \in \CO$ and $\ell_R(\CO)=0$.

Now we state the main result of this section. 

\begin{theorem}\label{explicit}
Let $W_0$ be a finite Coxeter group and $\s$ be a length-preserving group automorphism on $W_0$. Let $\CO$ be the $\s$-conjugacy class of $w_0$. Then 
$$\ell(w_0)-2\max\{\ell(x);x\le \s(x)w_0\}=\ell_R(\CO).$$
Moreover, if $W_0$ is a finite Weyl group, then we also have $$\ell_R(\CO)=\min\{d_{\G}(x, \s(x) w_0); x \in W_0\}.$$
\end{theorem}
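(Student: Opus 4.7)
My plan splits into a uniform combinatorial part and a case-by-case existence part. Set $N = \ell(w_0)$, $M = \max\{\ell(x) : x \le \s(x) w_0\}$, $r = \ell_R(\CO)$, and, when $W_0$ is a Weyl group, $D = \min\{d_\G(x, \s(x) w_0) : x \in W_0\}$. I would first establish the easy bound $r \le N - 2M$ uniformly for all finite Coxeter groups. Given $x$ with $x \le \s(x) w_0$, a saturated Bruhat chain from $x$ to $\s(x) w_0$ has exactly $N - 2\ell(x)$ covers, and each cover is right multiplication by a reflection, so $x^{-1}\s(x) w_0$ is a product of $N - 2\ell(x)$ reflections and hence $\ell_R(x^{-1}\s(x) w_0) \le N - 2\ell(x)$. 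Conjugating by $x$ shows $\ell_R(x^{-1}\s(x) w_0) = \ell_R(\s(x) w_0 x^{-1})$, and $\s(x) w_0 x^{-1}$ lies in $\CO^{-1}$. Since $w_0^2 = 1$ and reflection length is preserved by inversion, $\ell_R(\CO^{-1}) = \ell_R(\CO) = r$. Combining, $r \le N - 2\ell(x)$ for every such $x$, so $r \le N - 2M$.

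The harder direction $N - 2M \le r$ asks me to construct $x$ with $\ell(x) = (N - r)/2$ and $x \le \s(x) w_0$. Here I would reduce to irreducible $W_0$ by the product decomposition, then case split according to the discussion preceding the theorem: (i) $\s = \Ad(w_0)$, (ii) $\s = \id$, and (iii) $\s$ a nontrivial diagram automorphism giving one of ${}^2 D_{2k}$, ${}^3 D_4$, ${}^2 F_4$, ${}^2 I_{2k}$. In cases (i) and (ii) the target reflection length is $r = \ell_R(w_0)$, and I would construct $x$ from a carefully chosen rank-$(n - r)$ parabolic subgroup $W_J$ (arising from a subset of simple roots on which $-w_0$ acts trivially), taking $x$ to be a length-$(N - r)/2$ element of $W_J$ and verifying $x \le x w_0$ via the subword property. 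For (iii), $r$ is given explicitly, and $x$ would be built in an analogous $\s$-stable parabolic subsystem of rank $n - r$, with the Bruhat inequality verified by an explicit root-level computation (for example, via the signed-permutation realization in the ${}^2 D_{2k}$ case).

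The Weyl-group equality $D = r$ then follows quickly. For $D \le r$: fix a maximizer $x$ with $\ell(x) = M$ and $x \le \s(x) w_0$; a saturated Bruhat chain from $x$ to $\s(x) w_0$ lifts to an upward-only, hence weight-zero, path in $\G_\Phi$ of length $N - 2M = r$. For $D \ge r$: each edge of $\G_\Phi$ is right multiplication by a reflection, so $d_\G(x, y) \ge \ell_R(x^{-1} y)$; applying this with $y = \s(x) w_0$ and reusing the $\CO^{-1}$ argument above gives $d_\G(x, \s(x) w_0) \ge \ell_R(\s(x) w_0 x^{-1}) \ge r$ for every $x$.

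The principal obstacle is the construction step. The acknowledgement that Adams and Vogan computed these numbers for exceptional groups via the Atlas software before the pattern was recognized suggests no transparent uniform construction is available, so I expect the case analysis for ${}^3D_4$, $E_6$, $E_7$, $E_8$, $F_4$, $H_3$, $H_4$ (and the ${}^2 D_{2k}$ family) to require careful root-system bookkeeping and type-specific choices of $x$; a cleaner uniform explanation, perhaps via the connection with dimensions of Springer fibers hinted at in the acknowledgement, would be a natural goal but is beyond what this plan delivers.
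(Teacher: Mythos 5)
Your overall strategy mirrors the paper's: the inequality $\ell_R(\CO) \le \ell(w_0) - 2\max\{\ell(x);x\le\s(x)w_0\}$ obtained by lifting a saturated Bruhat chain to a reflection factorization, and the two quantum Bruhat graph estimates (an upward-only path gives $d_\G(x,\s(x)w_0) \le \ell(w_0)-2\ell(x)$ whenever $x\le\s(x)w_0$, while reading off the reflections along any path gives $d_\G(x,\s(x)w_0) \ge \ell_R(\CO)$) are all the same as in the paper, up to the cosmetic difference that you land in $\CO^{-1}$ rather than $\CO$ and then invoke $\ell_R(\CO^{-1})=\ell_R(\CO)$. So the uniform half of your plan is sound.

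The genuine gap is the construction step. Your proposal to take $x$ inside a rank-$(n-r)$ parabolic $W_J$ ``arising from a subset of simple roots on which $-w_0$ acts trivially'' already fails in type $A_3$: there $r=\ell_R(w_0)=2$, so the target length is $(6-2)/2=2$, but $-w_0$ fixes only $\alpha_2$, so $W_J=\langle s_2\rangle$ contains no element of length $2$. Worse, in type $A_n$ with $n$ even $-w_0$ fixes no simple root at all, while $n-r=n/2>0$, so the proposed $W_J$ does not even exist with the required rank. The paper's construction is instead an induction on rank: one chooses a maximal proper standard parabolic $W'_0$ of rank $n-1$, takes the element $x'$ supplied by the inductive hypothesis, and produces a minimal coset representative $y$ satisfying the four Bruhat-theoretic conditions in \S\ref{elt-y}; the resulting $x=x'y$ typically lies in no proper standard parabolic. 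Making this work requires a type-by-type choice of $J$ and $y$, recorded in a table, together with separate ad hoc constructions for ${}^2D_{2k}$, ${}^3D_4$, ${}^2F_4$, ${}^2I_{2k}$. Finally, your reduction ``to irreducible $W_0$ by the product decomposition'' is also underspecified: when $\s$ permutes the irreducible factors, $\ell_R(\CO)$ vanishes for an even-length $\s$-orbit of factors and collapses to the reflection length in a single factor for an odd-length orbit; the paper spends \S\ref{sec:pf1}--\S\ref{sec:pf2} justifying exactly this reduction.
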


\subsection{One direction}\label{sec:ineq} Let $x \in W_0$. Suppose that $x\le \s(x)w_0$. Let $m=\ell(\s(x)w_0)-\ell(x)=\ell(w_0)-2\ell(x)$. Then, there exists reflections $t_1,t_2,\ldots,t_m$ such that $\s(x)w_0=xt_1t_2\cdots t_m$. Hence $xw_0 \s(x)^{-1}= (xt_mx\i) \cdots(xt_1x\i)  $. By definition, we have $\ell_R(xw_0\s(x)^{-1})\le m$. Therefore
\begin{align*} \ell_R(\CO) & \le \min\{ \ell(w_0)-2\ell(x);x\le \s(x)w_0  \}\\ &=\ell(w_0)-2\max\{\ell(x);x\le \s(x)w_0\}.
\end{align*} 

In the following subsections, we shall prove that 
\[\tag{a} \ell(w_0)-2\max\{\ell(x);x\le \s(x)w_0\} \le \ell_R(\CO).\]

This will finish the proof of the first part of Theorem \ref{explicit}. 

Now suppose that $W_0$ is a finite Weyl group. Let $m=d_{\G}(x, \s(x) w_0)$. Then $\s(x) w_0=x s_{\b_1} \cdots s_{\b_m}$ for some positive roots $\b_1, \ldots, \b_m$. Then $x w_0 \s(x) \i=s_{x(\b_m)} \cdots s_{x(\b_1)}$ and $\ell_R(x w_0 \s(x) \i)\le m$. 
Therefore, $$ \ell_R(\CO) \le \min\{d_{\G}(x, \s(x) w_0); x \in W_0\} .$$

On the other hand, let $x \in W_0$ with $x \le \s(x) w_0$. Then we have a path in the quantum Bruhat graph from $x$ to $\s(x) w_0$ consisting of upward edges. In particular, $$d_{\G}(x, \s(x) w_0) \le \ell(x w_0)-\ell(x)=\ell(w_0)-2\ell(x).$$ Hence 
\begin{align*} \min\{d_{\G}(x, \s(x) w_0); x \in W_0\} & \le \min\{ \ell(w_0)-2\ell(x);x\le \s(x)w_0  \}\\ &=\ell(w_0)-2\max\{\ell(x);x\le \s(x)w_0\}.
\end{align*} 

Hence the second part of Theorem \ref{explicit} also follows from the inequality (a). 

\subsection{Reduction procedure: part I}\label{sec:pf1} We may write $W_0$ as
$$W_0=W_{0,1}\times W_{0,2}\times \cdots \times W_{0,l},$$
where for any $i ,$ $\s$ acts transitively on the set of irreducible components of $W_{0,i}$. Let $\CO_i$ be the $\s-$conjugacy class of the longest element $w_{0, i}$ of $W_{0,i}$. Then we have $\CO=\{ (w_1,\ldots,w_l);w_i\in \CO_i   \}$. Thus
$$\ell_R(\CO)=\sum_{i=1}^{l}\ell_R(\CO_i).$$

By definition, 
$$ \ell(w_0)-2\max\{\ell(x);x\le\s(x)w_0\}=\sum_{i=1}^l \left( \ell(w_{0, i})-2\max\{\ell(x_i);x_i\le \s(x_i)w_{0, i}\} \right).$$

Thus to prove \S\ref{sec:ineq} (a), it suffices to consider the case where $\s$ acts transitively on the set of irreducible components of $W_0$. 

\subsection{Reduction procedure: part II}\label{sec:pf2} In this subsection, we assume that $\s$ acts transitively on the set of irreducible components of $W_0$. We have $W_0=W'_0 \times \cdots \times W'_0$ with $l$ irreducible components. Moreover, we may fix the identification among the irreducible components of $W_0$ so that there exists a length-preserving group automorphism $\t$ on $W'_0$ with $$\s(w_1, w_2, \ldots, w_l)=(\t(w_l), w_1, w_2, \ldots, w_{l-1}) \text{ for all } w_1, \ldots, w_l \in W'_0.$$ 

Let $w'_0$ be the longest element of $W'_0$. Then $w_0=(w'_0, w'_0, \ldots, w'_0)$. 

\subsubsection{$l$ even case} In this case $w_0=(w'_0, 1, w'_0, 1, \ldots, w'_0, 1) \s (w'_0, 1, w'_0, 1, \ldots, w'_0, 1)$. Hence $1 \in \CO$ and $\ell_R(\CO)=0$. 

Note that if $x\le \s(x)w_0,$ then $\ell(x)\le \ell(\s(x)w_0)=\ell(w_0)-\ell(x)$ and then 
$\max\{\ell(x);x\le \s(x)w_0\} \le \frac{1}{2} \ell(w_0)$.
It is easy to see that $x=( w_0',1,w_0',1,\ldots,w_0',1)$ satisfies the condition $x\le\s(x)w_0$. Hence $\max\{\ell(x);x\le \s(x)w_0\} = \frac{1}{2} \ell(w_0) $ and $$\ell(w_0)-2\max\{\ell(x);x\le \s(x)w_0\} =0=\ell_R(\CO).$$

\subsubsection{$l$ odd case} 
Let $\CO'$ be the $\t$-conjugacy class of $w'_0$ in $W'_0$. We show that 

(a) $\ell_R(\CO)=\ell_R(\CO')$. 

Let $f: W_0 \to W'_0$ be the map sending $(w_1, \ldots, w_l)$ to $w_l w_{l-1} \cdots w_1$. Then $f$ sends a $\s$-conjugacy class in $W_0$ to a $\t$-conjugacy class in $W'_0$. By definition, $\ell_R(w_1, \ldots, w_l)=\ell_R(w_1)+\cdots+\ell_R(w_l) \ge \ell_R(w_l \cdots w_1)$. If $(w_1, \ldots, w_l) \in \CO$, then $w_l \cdots w_1 \in \CO'$. Thus $\ell_R(\CO)\ge \ell_R(\CO')$. On the other hand, for any $w\in\CO',$ it is easy to see that $(w,1,\ldots,1)\in\CO$. Then $\ell_R(\CO)\le \ell_R(\CO')$.

(a) is proved. 

Let $w'$ be a maximal length element in $\{ x'\in W_0';x'\le \tau(x')w_0'  \}$. Take $$x=(w',w'w_0',w',\ldots,w'w_0',w').$$ It is easy to see that $x\le \s(x)w_0$. Note that $\ell(w_0)-2\ell(x)=l\ell(w_0')-(l+1)\ell(w')-(l-1)\ell(w'w_0')=\ell(w_0')-2\ell(w')$.
Then $$\ell(w_0)-2\max\{\ell(x);x\le\s(x)w_0\}\le \ell(w_0')-2\max\{\ell(x');x'\le\tau(x')w_0'\}.$$

Thus to prove \S\ref{sec:ineq} (a), it suffices to consider the case where $W_0$ is irreducible.

\subsection{Irreducible cases} In the rest of this section, we calculate the number $\ell(w_0)-2\max\{\ell(x);x\le\s(x)w_0\}$ for each irreducible Weyl group. Combined \S\ref{sec:refl} with \S\ref{sec:pf1} and \S\ref{sec:pf2}, we prove \S\ref{sec:ineq} (a) and hence finish the proof of Theorem \ref{explicit}. 

If $\s=\Ad(w_0)$, then \begin{align*} \max\{\ell(x);x\le \s(x)w_0\} &=\max\{\ell(x);x\le w_0x \}=\max\{\ell(y\i);y \i\le w_0 y\i \} \\ &=\max\{\ell(y);y\le y w_0\}.\end{align*} Then $\ell(w_0)-2\max\{\ell(x);x\le \s(x)w_0\}=\ell(w_0)-2\max\{\ell(x);x\le xw_0\}$. The $\s=\Ad(w_0)$ case may be reduced to the $\s=\id$ case.

The $\s=\id$ case will be discussed in \S\ref{elt-y}. The ${}^2 D_{2k}$, ${}^3 D_4$, ${}^2 F_4$ and ${}^2I_{2k}$ cases will be discussed in \S\ref{sec:2D}--\S\ref{sec:2I}.

\subsection{Explicit construction}\label{elt-y} In this subsection we discuss the general strategy (in the $\s=\id$ case) to construct a desired element $x$ with $$x \le x w_0 \text{ and } \ell(x)=\frac{1}{2}(\ell(w_0)-\ell_R(w_0)).$$ This would imply that $\ell(w_0)-2\max\{\ell(x);x\le xw_0\}\le \ell_R(w_0)$.  
The ${}^2 D_{2k}$ case is handled by the same strategy (see \S\ref{sec:2D}). 


We use the following induction procedure. The base case is type $A_1$, where $\frac{1}{2}(\ell(w_0)-\ell_R(w_0))=0$ and we may take $x=1$. 

Let $J$ be a subset of the simple reflections and $W'_0 \subset W_0$ be the proper standard parabolic subgroup generated by $J$. Let $w'_0$ be the longest element of $W'_0$. We have $w_0=w'_0 z$ for some $z \in W_0$. By inductive hypothesis, there exists $x' \in W'_0$ with $x' \le x' w'_0$ and $\ell(x')=\frac{1}{2}(\ell(w'_0)-\ell_R(w'_0))$. 

Suppose that there exists $y \in W_0$ such that 
\begin{enumerate}
\item $y$ is the minimal length element in the coset $W'_0 y$; 

\item $z (w_0 \i y w_0)$ is the minimal length element in the coset $W'_0 z (w_0 \i y w_0)$; 

\item $y \le z (w_0 \i y w_0)$; 

\item $\ell(y)=\frac{1}{2}(\ell(w_0)-\ell_R(w_0))-\frac{1}{2}(\ell(w'_0)-\ell_R(w'_0))$. 
\end{enumerate}

Then we have $x' y w_0=x' w_0 (w_0 \i y w_0)=(x' w'_0) \bigl(z (w_0 \i y w_0)\bigr)$. By assumption on $y$, we have $x' y\le x' y w_0$ and $\ell(x' y)=\frac{1}{2}(\ell(w_0)-\ell_R(w_0))$. Thus $x' y$ is a desired element for $W_0$. 

\smallskip

Now we do the case-by-case analysis. We use the labelling of \cite{Bour}. To simplify notation, in the exceptional types, we may simple write $s_{ij\cdots}$ for $s_is_j\cdots$. Following \cite[\S 7]{He07}, for $1\le a,b\le n$, define

$$s_{[b ,a]}=\begin{cases} 
s_{b}s_{b-1}\ldots s_{a}, & \text{if } a\le b;\\
1,  &    \text{otherwise}.
  \end{cases}$$
Define $\sharp W_0=\frac{1}{2}(\ell(w_0)-\ell_R(w_0))$ and $\sharp W_0'=\frac{1}{2}(\ell(w_0')-\ell_R(w_0'))$.

The following table gives in each case an explicit subset $J$ and an explicit element $y$ we need in the induction procedure. The explicit expression for $z=(w'_0)^{-1} w_0$ is from \cite[\S 1.5]{He09}\footnote{There was a typo for type $E_6$ in \cite{He09}, which we corrected here.}. Also type $G_2$ is the same as type $I_m$ for $m=6$ and we do not put type $G_2$ case separately in the table. 

\begin{center}
\begin{tabular}{ |c|c|c|c|c|c|c|c|c| } 
 \hline
 Type & $J$ & $\sharp W_0$ & $\sharp W_0-\sharp W_0'$ & $z$ & $y$\\ 
 \hline
 $A_n(n\ge 2)$ & $ \{ 2,\ldots,n\} $   &  $ \lfloor \frac{n^2}{4}\rfloor$ &   $\lfloor \frac{ n}{2} \rfloor  $ & $s_{[n,1]}^{-1}$   & $s_{[  \lfloor \frac{n}{2}\rfloor ,1]  }^{-1}$  \\ 
 \hline
 $B_n/C_n(n\ge 2)$ & $ \{ 2,\ldots,n\} $   &  $  \frac{n^2-n}{2} $ &   $n-1  $ & $s_{[n,1]}^{-1}s_{[n-1,1]}$   & $s_{[ n-1,1 ]  }^{-1}$  \\ 
  \hline
 $D_{n} (2\nmid n,n\ge 5)$ & $ \{ 2,\ldots,n\} $   &  $ \lceil \frac{n(n-2)}{2}\rceil$ &   $n-1$ & $ s_{[n,1 ]}^{-1}s_{[n-2,1]}   $   & $s_{[ n-1,1]}^{-1}$  \\ 
        \hline
 $D_{n} (2\mid n, n\ge 4)$ & $ \{ 2,\ldots,n\} $   &  $ \lceil \frac{n(n-2)}{2}\rceil$ &   $n-2$ & $ s_{[n,1  ]}^{-1}s_{[ n-2,1]} $   & $s_{[ n-2,1]}^{-1}$  \\ 
  \hline
 $E_6$& $ \{ 1,\ldots,5\} $   &  $16$ &   $ 8  $ & $z_{E_6}$   & $y_{E_6}$  \\ 
  \hline
 $E_7$& $ \{ 1,\ldots,6\} $   &  $ 28$ &   $12 $ & $z_{E_7}$   & $ y_{E_7}$  \\ 
  \hline
 $E_8$ & $ \{ 1,\ldots,7\} $   &  $56$ &   $28  $ & $z_{E_8}$   & $y_{E_8}$  \\ 
  \hline
 $F_4$ & $ \{ 	1,2,3\} $   &  $10$ &   $7$ & $z_{F_4}$   & $y_{F_4}$  \\ 
  \hline
 $H_3$ & $ \{ 2,3  \} $   &  $  6  $ &   $  5  $ & $z_{H_3}$   & $y_{H_3}$  \\ 
  \hline
 $H_4$ & $ \{ 1,2,3  \} $   &  $  28  $ &   $  22  $ & $z_{H_4}$   & $y_{H_4}$  \\
  \hline 
 $I_m(m\ge 5)$ & $ \{ 2  \} $   &  $  \lceil\frac{m}{2}\rceil -1 $ &   $  \lceil\frac{m}{2}\rceil -1  $ & $   z_{I_m}$ & $y_{I_m}$  \\
  \hline 
\end{tabular}
\end{center}

Here
\begin{gather*}
z_{E_6}=s_{[6,1]}s_{43542}s_{[6,3]}s_1, y_{E_6}=s_{[6, 1]} s_{4 5};\\
z_{E_7}=s_{[7, 1]} s_{43542} s_{[6, 3]} s_1 s_{[7, 2]} s_{[7, 4]} \i, y_{E_7}=s_{[7, 1]} s_{43546};\\
z_{E_8}=(s_{[8,1]} s_{43542} s_{[6,3]} s_1 s_{[7,2]} s_{[7,4]} \i)^2 s_8, y_{E_8}=s_{[8,1]} s_{43542} s_{[6,3]} s_{[7,4]} s_2 s_{[8, 3]};\\
z_{F_4}=s_{[4, 1]} s_{3 2 3 4 3 2 3} s_{[4, 1]} \i, y_{F_4}=s_{[4, 1]} s_{324}; \\
z_{H_3}=s_{1 2 1 2 3 2 1 2 1 3 2 1}, y_{H_3}=s_{12312}; \\
z_{H_4}=(s_{[4,1]} s_2 s_1 s_{[3,1]} s_{23})^4 s_4, y_{H_4}=s_{[4,1]}s_{ 2 3 1 2 1 4 2 3 2 1 2 4 3 1 2 1 2 3};\\
z_{I_m}=s_{121\ldots} \text{ with } \ell(z_{I_m})=m-1, y_{I_m}=s_{121\ldots} \text{ with } \ell(y_{I_m})=\lceil \frac{m}{2}   \rceil-1.
\end{gather*}

\subsection{Type ${}^2 D_{2k}$}\label{sec:2D} By \S\ref{sec:refl}, $\ell_R(\CO)=2k-2$. Hence $$\frac{1}{2}(\ell(w_0)-\ell_R(\CO))=2k(k-1)+1.$$

If $k=2$. then $x=s_{ 1  3  2  1  3}$ is a desired element.

Now suppose $k\ge 3$. Let $W'_0$ be the standard parabolic subgroup generated by $s_2, \ldots, s_{2k}$. Then $W_0'$ is of type ${}^2D_{2k-1}$. Then $w_0=w'_0 z$ with $$z=(s_1 s_2 \cdots s_{2k-2}) s_{2k-1} s_{2k} (s_{2k-2} \cdots s_1).$$ We have $\s \mid_{W'_0}=\Ad(w'_0)$. By \S\ref{elt-y}, there exists $x'\in W_0'$ with $x'\le \s(x')w_0 $ and $\ell(x')=2(k-1)^2$. 

Take $y=s_1 s_2 \cdots s_{2k-2}s_{2k-1}$. Note that $\s(x'y)w_0=\s(x')w_0'(zw_0\s(y)w_0)=\s(x')w_0's_1 s_2 \cdots s_{2k-1}$. Hence $x'y\le \s(x'y)w_0 $ and $$\ell(x'y)=\ell(x')+\ell(y)=2(k-1)^2+2k-1=2k(k-1)+1.$$ Therefore $x'y$ is a desired element for type ${}^2 D_{2k}$.  

\subsection{Type ${}^3 D_4$}
Recall that $\ell(w_0)=12$. By \S\ref{sec:refl}, $\ell_R(\CO)= 2$. Hence $$\frac{1}{2}(\ell(w_0)-\ell_R(\CO))=5.$$ It is easy to see that $x=s_{4 3 1 2 1}$ is a desired element. 

\subsection{Type ${}^2 F_4$} By \S\ref{sec:refl}, $\ell_R(\CO)= 0$. Hence $$\frac{1}{2}(\ell(w_0)-\ell_R(\CO))=12.$$
It can be checked directly that $x=s_{2 1 3 2 4 3 2 1 3 2 4 3}$ is a desired element.

\subsection{Type ${}^2 I_{2k}$}\label{sec:2I} By \S\ref{sec:refl}, $\ell_R(\CO)= 0$. Hence $$\frac{1}{2}(\ell(w_0)-\ell_R(\CO))=k.$$
It can be checked directly that $x=s_{[1,k]}$ is a desired element.

\section{The main result}

Recall that $\mu^\diamond \in X_*(T)^{\G_0} \otimes \BQ\cong X_*(T)_{\G_0} \otimes \BQ$ is the average of the $\s$-orbit on $\mu_+$. Now we state the main result of this paper. 

\begin{theorem}\label{main} Suppose that $\underline \mu$ is superregular. Let $\CO$ be the $\s$-conjugacy class of $w_0$. Let $[b]\in B(\bG)$. If $\k(b)=\k(\mu)$ and $\mu^\diamond \ge \nu_b+2\rho^{\vee}$,  then
$$\dim X(\mu,b)=d_{\Adm(\mu)}(b)=\< \underline \mu  -\nu_b,\rho \>-\frac{1}{2}\text{def}_{\bG}(b)+  \frac{1}{2}\ell(w_0)-  \frac{1}{2}\ell_R(\CO).$$
\end{theorem}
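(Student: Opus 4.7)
The proof will combine an upper bound and a lower bound. The upper bound is essentially already in hand: by Corollary \ref{cor:4.2} we have $\dim X(\mu,b)\le d_{\Adm(\mu)}(b)$, by Proposition \ref{dim-formula} we can rewrite $d_{\Adm(\mu)}(b)$ in terms of the minimum $\min\{d_\G(x,\s(x)w_0); x\in W_0\}$, and by Theorem \ref{explicit} this minimum equals $\ell_R(\CO)$. Chaining these three inputs yields
\[
\dim X(\mu,b)\le \<\underline\mu-\nu_b,\rho\>-\tfrac12\text{def}_{\bG}(b)+\tfrac12\ell(w_0)-\tfrac12\ell_R(\CO),
\]
with no use yet of the hypothesis $\mu^\diamond\ge \nu_b+2\rho^\vee$.

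For the lower bound, the plan is to exhibit an explicit element $w\in\Adm(\mu)$ such that $X_w(b)$ has dimension equal to the virtual dimension $d_w(b)$, and for which $d_w(b)$ already realizes $d_{\Adm(\mu)}(b)$. The construction is the one already appearing in the proof of Proposition \ref{dim-formula}: choose $x\in W_0$ realizing the minimum $d_\G(x,\s(x)w_0)=\ell_R(\CO)$ (which exists by Theorem \ref{explicit}), and set
\[
w=x\, t^{\underline\mu-\text{wt}(x,\s(x)w_0)}\, w_0\,\s(x)^{-1}.
\]
By the second half of the proof of Proposition \ref{dim-formula} together with \S\ref{explicit-bound}, this $w$ lies in $\Adm(\mu)$, and its virtual dimension computes to the target number $\<\underline\mu-\nu_b,\rho\>-\tfrac12\text{def}_{\bG}(b)+\tfrac12(\ell(w_0)-\ell_R(\CO))$.

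It then remains to upgrade the inequality $\dim X_w(b)\le d_w(b)$ of Theorem \ref{dim-vir} to an equality for this specific $w$. This is precisely where the hypothesis $\mu^\diamond\ge \nu_b+2\rho^\vee$ enters. The translation part $\underline\mu-\text{wt}(x,\s(x)w_0)$ of $w$ differs from $\underline\mu$ by an element with $\rho$-pairing at most $\ell(w_0)$ (by \S\ref{Subsection 4.2}(b)), so under the superregularity hypothesis on $\underline\mu$, $w$ lies deep in the Shrunken Weyl chamber, and the separation $\mu^\diamond\ge \nu_b+2\rho^\vee$ translates into the ``sufficiently large $w$ relative to $\nu_b$'' hypothesis needed to invoke the individual dimension formula for $X_w(b)$ from \cite{He14} in the basic case and \cite{He20} in the general case. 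The plan is to verify the precise hypotheses of those theorems (namely that $w$ is in the Shrunken Weyl chamber and that its associated Newton point interacts with $\nu_b$ with the required slack), so that $\dim X_w(b)=d_w(b)$ holds for this $w$.

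The main obstacle I expect is this last verification: showing that the cocharacter-shift $\text{wt}(x,\s(x)w_0)$ and the finite Weyl group factors $x$, $w_0\s(x)^{-1}$ in our $w$ do not destroy the regularity conditions of \cite{He20}. In particular, one has to check that $w\in{}^{\BS}\tW\BS$ (or whatever normal form is needed), that its Newton-type invariant $\eta_\s(w)$ has the expected length $\ell(w_0)-\ell_R(\CO)$, and that the quantitative bound $\mu^\diamond\ge \nu_b+2\rho^\vee$ is exactly calibrated to the cocharacter distance ``$2\rho^\vee$'' that can be absorbed by the finite Weyl group factors. Once these checks are in place, combining the inequality $d_w(b)\le d_{\Adm(\mu)}(b)$ with $X_w(b)\subseteq X(\mu,b)$ gives the matching lower bound and completes the proof.
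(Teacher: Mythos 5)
Your proposal follows essentially the same logic as the paper's proof, but the paper makes a sharper choice in the lower-bound step that dissolves exactly the ``main obstacle'' you flag at the end. Rather than picking $x$ to minimize $d_\G(x,\s(x)w_0)$ and then working with $w=x\,t^{\underline\mu-\text{wt}(x,\s(x)w_0)}\,w_0\,\s(x)^{-1}$, the paper invokes the \emph{other} characterization in Theorem \ref{explicit}: it takes $x\in W_0$ with $x\le\s(x)w_0$ and $\ell(w_0)-2\ell(x)=\ell_R(\CO)$. For such an $x$ the quantum Bruhat graph path from $x$ to $\s(x)w_0$ can be taken entirely through upward edges, so $\text{wt}(x,\s(x)w_0)=0$, and the test element is simply $w=x\,t^{\underline\mu}\,w_0\,\s(x)^{-1}$ with translation part exactly $\underline\mu$. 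Membership $w\in\Adm(\mu)$ then follows immediately from Proposition \ref{adm-set}, and the equality $\dim X_w(b)=d_w(b)$ is read off directly from \cite[Theorem 1.1 \& \S 6.4]{He20} with no need to track how a cocharacter shift interacts with the regularity bounds; the virtual dimension computation collapses to $\ell(x)-\tfrac12\text{def}_{\bG}(b)+\<\underline\mu-\nu_b,\rho\>=\tfrac12\ell(w_0)-\tfrac12\ell_R(\CO)-\tfrac12\text{def}_{\bG}(b)+\<\underline\mu-\nu_b,\rho\>$. Your version is not wrong---the shifted translation part is still superregular enough by \S\ref{explicit-bound}---but you would indeed have to carry out the calibration you worry about, whereas the paper's choice makes it automatic. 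So the correct fix to your proposal is: use the $\max\{\ell(x);x\le\s(x)w_0\}$ side of Theorem \ref{explicit} rather than the $\min d_\G$ side when constructing the witnessing element.
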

\begin{remark}
(1) The assumption in Theorem \ref{main} implies that $[b] \in B(\bG, \mu)$. Thus by Theorem \ref{KR-conj}, $X(\mu, b) \neq \emptyset$. 

(2) We would like to point out the following special cases of Theorem \ref{main}. If $\s=\id$ (e.g., $\bG$ is split over $F$) or $\s=\Ad(w_0)$, then
$$\dim X(\mu,b)=\< \underline \mu  -\nu_b,\rho \>-\frac{1}{2}\text{def}_{\bG}(b)+  \frac{1}{2}\ell(w_0)-  \frac{1}{2}\ell_R(w_0). $$
\end{remark}

\begin{proof}
By Corollary \ref{cor:4.2}, $\dim X(\mu, b) \le d_{\Adm(\mu)}(b)$. By Proposition \ref{dim-formula} and Theorem \ref{explicit}, we have $d_{\Adm(\mu)}(b)=\< \underline \mu  -\nu_b,\rho \>-\frac{1}{2}\text{def}_{\bG}(b)+  \frac{1}{2}\ell(w_0)-  \frac{1}{2}\ell_R(\CO)$. 

By Theorem \ref{explicit}, there exists $x \in W_0$ such that $x \le \s(x) w_0$ and $\ell(w_0)-2 \ell(x)=\ell_R(\CO)$. Let $w=xt^{\underline\mu}w_0\s(x)\i$. Since $x \le \s(x) w_0$, we have $\text{wt}(x, \s(x) w_0)=0$ and thus by Proposition \ref{adm-set}, $w \in \Adm(\mu)$. In particular, $\dim X_w(b) \le \dim X(\mu, b)$. 

By \cite[Theorem 1.1 \& \S 6.4]{He20}, we have $\dim X_{w}(b)=d_{ w}(b)$. By definition, 
\begin{align*}
d_w(b) &=  \frac{1}{2}(\ell(x)-\ell(\s(x)w_0)+\ell(w_0) +\< \underline\mu,2\rho\> )  -\frac{1}{2}\text{def}_{\bG}(b) - \<\nu_b,\rho\>\\
&= \ell(x)-\frac{1}{2}\text{def}_{\bG}(b) +  \<\underline\mu-\nu_b,\rho\>\\
&= \frac{1}{2}\ell(w_0)-\frac{1}{2}\ell_R(\CO)-\frac{1}{2}\text{def}_{\bG}(b) +  \<\underline\mu-\nu_b,\rho\> \\
&= d_{\Adm(\mu)}(b).
\end{align*}

Therefore $\dim X_w(b)=d_{\Adm(\mu)}(b)$ and hence $\dim X(\mu, b)=d_{\Adm(\mu)}(b)$. 
\end{proof}

\subsection{Possible connection to Springer fibers} Finally, we would like to discuss an intriguing connection to Springer fibers. This was pointed out to us by G. Lusztig. 

We assume that the group $\bG$ is split over $F$ and the characteristic of $F$ is not a bad prime for $\bG$. Lusztig in \cite[Theorem 0.4]{Lu11} introduced the map from the set of conjugacy classes of $W_0$ to the set of unipotent conjugacy classes of $\bG(\bar F)$ Let $u$ be a unipotent element that lies in the image of $\CO$. Let $\CB$ be the flag variety of $\bG$ and $\CB_u$ be the associated Springer fiber of $u$. This is a variety over $\bar F$. Then one may deduce from \cite[Theorem 0.7(b)]{Lu11} that $\ell(w_0)-\ell_R(\CO)=2 \dim_{\bar F}(\CB_u)$. 

Combining with Theorem \ref{main}, we have that 
$$\dim X(\mu, b)=\< \underline \mu  -\nu_b,\rho \>-\frac{1}{2}\text{def}_{\bG}(b)+\dim_{\bar F}(\CB_u).$$

It is worth pointing out that $\< \underline \mu  -\nu_b,\rho \>-\frac{1}{2}\text{def}_{\bG}(b)$ is the dimension of the affine Deligne-Lusztig variety $X_{\mu}(b)$ in the affine Grassmannian (see \cite{GHKR1} and \cite{Vi06}). 

Thus we have $$\dim X(\mu, b)-\dim X_{\mu}(b)=\dim_{\bar F}(\CB_u).$$
Here the left hand side is the difference of the dimension of affine Deligne-Lusztig varieties (over the residue of $\breve F$) in the affine flag variety and the affine Grassmannian and the right hand side is the the dimension of a Springer fiber (over $\bar F$). It is very interesting to investigate the possible relation between the affine Deligne-Lusztig varieties and the Springer fibers.

\end{document}